\newtheorem{theorem}{Theorem}[section]
\newtheorem{lemma}[theorem]{Lemma}
\newtheorem{corollary}[theorem]{Corollary}
\theoremstyle{definition}
\newtheorem{example}[theorem]{Example}
\theoremstyle{remark}
\numberwithin{equation}{section}
\newcommand{\Ka}{\mathcal{K}}
\newcommand{\Ba}{\mathcal{B}}
\newcommand{\Aa}{\mathscr{A}}
\newcommand{\Ya}{\mathscr{Y}}
\newcommand{\Ca}{\mathcal{C}}
\newcommand{\Xa}{\mathscr{X}}
\newcommand{\C}{\mbox{$\mathbb{C}$}}
\newcommand{\fii}{\varphi}
\newcommand{\ip}[2]{\left\langle#1,#2\right\rangle}
\begin{document}

\setcounter{page}{1}

\title[Multi-$\Aa$-linearity preserving operators]
{Orthogonality Hilbert $\Aa$-modules and operators preserving multi-$\Aa$-linearity}
\author[P. W\'ojcik \MakeLowercase{and} A. Zamani]
{Pawe\l{} W\'ojcik$^1$ \MakeLowercase{and} Ali Zamani$^2$}

\address{$^1$Institute of Mathematics, Pedagogical University of Cracow, Podchor\c a\.zych~2, 30-084 Krak\'ow, Poland}
\email{pawel.wojcik@up.krakow.pl}
\address{$^2$Corresponding author, School of Mathematics and Computer Sciences, Damghan University, Damghan, P.~O.~BOX 36715-364, Iran}
\email{zamani.ali85@yahoo.com}

\subjclass[2010]{46B20, 46C50, 47B49, 46L05, 15A69, 15A86}
\keywords{Hilbert $C^*$-modules; inner product orthogonality; Birkhoff--James
orthogonality; multi-$\Aa$-linearity; orthogonality preserving property; orthogonality equation}
%%%%%%%%%%%%%%%%%%%%%%%%%%%%%%%%%%%%%%%%%%%%%%
\begin{abstract}
In this paper we present results concerning
orthogonality in Hilbert $C^*$-modules.
Moreover, for a $C^*$-algebra $\mathscr{A}$, we prove theorems concerning the multi-$\Aa$-linearity and its
preservation by $\Aa$-linear operators.
New version of solution of the orthogonality equation on Hilbert $C^*$-modules
and mappings preserving orthogonality are also investigated.
\end{abstract} \maketitle
%%%%%%%%%%%%%%%%%%%%%%%%%%%
\section{Introduction and preliminaries}
Let us recall some basic facts about $C^*$-algebras and
Hilbert $C^*$-modules and introduce our notation.
A \textit{$C^*$-algebra} is a complex Banach $*$-algebra $\big(\mathscr{A}, \|\!\cdot\!\|)$
such that $\|a\|^2 = \|a^*a\|$ for every $a\in \mathscr{A}$. An element $a\in \mathscr{A}$
is called \textit{positive} (we write $a\geq0$) if $a = c^*c$ for some $c\in \mathscr{A}$.
The relation $``\geq"$ on $\mathscr{A}$ is given by $a \geq b$ if and only if $a -b \geq 0$.
If $a\in \mathscr{A}$ is positive, then there exists a unique positive $p\in \mathscr{A}$ such that
$a = p^2$; such an element $p$ is called the \textit{positive square root} of $a$.
A linear functional $\varphi$ of $\mathscr{A}$ is positive if $\varphi(a)\geq0$ for every positive element
$a\in \mathscr{A}$. A \textit{state} is a positive linear functional whose norm is equal to one.
Let $\mathscr{A}$ be a $C^*$-algebra and let $\Xa$ be an algebraic \textit{right $\Aa$-module} which is a complex linear space with
compatible scalar multiplication \big(i.e., $(\alpha x)a = x(\alpha a) = \alpha (xa)$ for all $x\in \Xa, a\in \Aa, \alpha \in \C$\big).
The space $\Xa$ is called a \textit{(right) pre-Hilbert $\Aa$-module} (inner product $\Aa$-module) if there
exists an $\Aa$-valued inner product, i.e., a mapping $\ip{\cdot}{\cdot}\Xa\times \Xa\to \Aa$ satisfying

(C1)     $\ip{x}{x}\geq 0$ and $\ip{x}{x}=0$ if and only if $x = 0$,

(C2)     $\ip{x}{\alpha y+\beta z}=\alpha\ip{x}{y}+\beta\ip{x}{z}$,

(C3)     $\ip{x}{ya}=\ip{x}{y}a$,

(C4)     $\ip{x}{y}=\ip{y}{x}^*$,

for all $x, y, z\in \Xa, a\in \Aa, \alpha, \beta \in \C$.
For $x\in \Xa$, by $|x|$ we denote the unique positive square of $\ip{x}{x} \in \Aa$.
A pre-Hilbert $\Aa$-module $\Xa$ is called a \textit{Hilbert $\mathscr{A}$-module} if it is complete with
respect to the norm $\|x\|:= \sqrt{\|\ip{x}{x}\|}$.
Hilbert spaces are left Hilbert $\C$-modules.
Any $C^*$-algebra $\mathscr{A}$ is a Hilbert $C^*$-module over itself via
$\langle a, b\rangle := a^*b$. The corresponding norm is just the norm on $\mathscr{A}$
because of the $C^*$-condition.
Given a positive functional $\fii$ on $\Aa$, we have the following useful version of the Cauchy--Schwarz inequality:
\begin{align}\label{inequality-positive-functional}
\forall_{x, y\in \mathscr{X}}\ \ \ |\fii(\ip{x}{y})|^2\leq \fii(\ip{x}{x})\fii(\ip{y}{y}).
\end{align}
A Hilbert $\mathscr{A}$-module $\mathscr{X}$ is said to be \textit{full} if the two sided ideal
generated by all products $\langle x, y\rangle, \, x,y \in \mathscr{X}$,
is dense in $\mathscr{A}$. By $\Ka(\mathscr{X})$ we denote the $C^*$-algebra spanned by
$\big\{\theta_{x,y} : \, x,y \in \mathscr{X}\big\}$, where map
$\theta_{x,y}\colon\mathscr{X} \rightarrow \mathscr{X}$ is defined by $\theta_{x,y}(z):= x\langle y, z\rangle$.
We refer the reader to \cite{Lan, Mu} for more information on the basic theory of $C^{*}$-algebras and Hilbert $C^*$-modules.

A concept of orthogonality in a Hilbert $C^*$-module can be defined with respect to the
$C^*$-valued inner product in a natural way, that is, two elements $x$ and $y$ of a Hilbert $C^*$-module
$\mathscr{X}$ are called \textit{orthogonal}, in short $x \perp y$, if $\langle x, y\rangle = 0$.
Let us recall other types of orthogonality in Hilbert $C^*$-modules.
One of the most important is the concept of the Birkhoff--James orthogonality:
if $x, y$ are elements of a Hilbert $C^*$-module $\mathscr{X}$,
then $x$ is \textit{orthogonal to} $y$ \textit{in the Birkhoff--James sense} \cite{A.R.1, B.G}, in short $x \perp_{B} y$, if
\begin{align*}
\forall_{\lambda\in\mathbb{C}} \ \ \|x + \lambda y\| \geq \|x\|.
\end{align*}
As a natural generalization of the notion of Birkhoff--James orthogonality,
the concept of strong Birkhoff--James orthogonality,
which involves modular structure of a Hilbert $C^{*}$-module was introduced in \cite{A.R.2}.
When $x$ and $y$ are elements of a Hilbert $\mathscr{A}$-module $\mathscr{X}$,
$x$ is \textit{orthogonal to} $y$ \textit{in the strong Birkhoff--James sense}, in short $x \perp^s_{B} y$, if
\begin{align*}
\forall_{a\in \mathscr{A}} \ \ \|x + ya\| \geq \|x\|.
\end{align*}
This orthogonality is "between" $\perp$ and $ \perp_{B} $, i.e.,
\begin{align*}
\forall_{x,y\in \mathscr{X}}\ \ \ x \perp y \, \Longrightarrow \, x\perp^s_{B} y \, \Longrightarrow \, x\perp_{B} y,
\end{align*}
while the converses do not hold in general (see \cite{A.R.2}).

In this paper we first present results concerning
orthogonality in Hilbert $C^*$-modules.
In particular, we give a characterization of orthogonality in term of norm inequalities.
Then we prove theorems concerning the multi-$\Aa$-linearity and its
preservation by $\Aa$-linear operators. Some other related results are also discussed.
Finally, new version of solution of the orthogonality equation on Hilbert $C^*$-modules
and mappings preserving orthogonality are investigated.
%%%%%%%%%%%%%%%%%%%%%%%%%%
\section{Orthogonality in Hilbert $C^*$-modules}
Over the years many mathematicians have studied these concepts of orthogonality
in Hilbert $C^*$-modules in \cite{A.R.1, A.R.2, A.R.3, B.G, M.Z, W}.
In particular, it was proved in \cite{A.R.4, I.T, Z.M.F} that for elements $x$ and $y$ of a Hilbert $\mathscr{A}$-module $\mathscr{X}$,
$x\perp y$ is equivalent to each of the following statements:
\begin{itemize}
\item[(i)] $\forall_{\lambda \in \mathbb{C}}\ \ \ |x + \lambda y| = |x - \lambda y|$.
\item[(ii)] $\forall_{a\in \mathscr{A}}\ \ \ |x + ya| = |x - ya|$.
\item[(iii)] $\forall_{a\in \mathscr{A}}\ \ \ \|x + ya\| = \|x - ya\|$.
\item[(iv)] $\forall_{\lambda \in \mathbb{C}}\ \ \ |x + \lambda y|^2\geq |x|^2$.
\item[(v)] $\forall_{a\in \mathscr{A}}\ \ \ |x + ya|^2 \geq |x|^2$.
\item[(vi)] $\forall_{a\in \mathscr{A}}\ \ \ |x + ya| \geq |x|$.
\end{itemize}
We begin with a major motivation for stating this section.
Suppose $\mathscr{X}$ is a Hilbert $C^*$-module over a $C^*$-algebra $\mathscr{A}$ and $x, y \in \mathscr{X}$.
By the definition of the Birkhoff--James orthogonality, we have
\begin{align*}
x\perp_{B}y &\Longleftrightarrow \forall_{\lambda\in\mathbb{C}} \ \|x + \lambda y\|
\geq \|x\| \Longleftrightarrow \forall_{\alpha\in\mathbb{C}\setminus\{0\}} \ \left\|x + \frac{1}{\alpha} y\right\| \geq \|x\|
\\& \Longleftrightarrow \forall_{\alpha\in\mathbb{C}} \ \|\alpha x + y\| \geq \|\alpha x\|,
\end{align*}
and hence
\begin{align*}
x\perp_{B}y \ \Longleftrightarrow \ \forall_{\alpha\in\mathbb{C}} \ \ \|\alpha x + y\| \geq \|\alpha x\|.
\end{align*}
The above property is the key notion of the section. Namely, since in
Hilbert $C^*$-modules the role of scalars is played by the elements of the
underlying $C^*$-algebra, therefore, the question arises whether it is also true for
the strong Birkhoff--James orthogonality, i.e. whether $x\perp^s_{B}y$ is equivalent to the assertion that
\begin{align}\label{I.1.L.2.0}
\forall_{a\in \mathscr{A}}\ \ \|xa + y\| \geq \|xa\|.
\end{align}
The answer is negative in general, as shown in the following example.
\begin{example}
Let us take the $C^*$-algebra $\mathscr{A}$ of all complex $2\times 2$ matrices,
and regard it as a Hilbert $C^*$-module over itself.
Let $X = \begin{bmatrix}
1 & 0 \\
0 & 1
\end{bmatrix}$ and $Y = \begin{bmatrix}
1 & 0 \\
0 & 0
\end{bmatrix}$. Then $X\perp^s_B Y$ since for any $A = \begin{bmatrix}
a_{11} & a_{12} \\
a_{21} & a_{22}
\end{bmatrix}$
we have
\begin{align*}
\|X + YA\| = \left\|\begin{bmatrix}
1+a_{11} & a_{12} \\
0 & 1
\end{bmatrix}\right\| \geq 1 = \|X\|.
\end{align*}
But (\ref{I.1.L.2.0}) does not hold since for $A = -Y$ we have $\|XA + Y\| = 0 < \|XA\| = 1$.
\end{example}
%%%%%%%%%%%%%%%%%%%%%%%%%%%%
In the next theorem, we prove that (\ref{I.1.L.2.0}) is actually equivalent
to the orthogonality with respect to the $C^*$-valued inner product.
The following lemma, that was obtained in \cite[Theorem~2.7]{A.R.1} and \cite[Theorem~4.4]{B.G}
independently, is essential for supporting our conclusion.
%%%%%%%%%%%%%%%%%%%%%%%%
\begin{lemma}\label{L.2.0}
Let $\mathscr{X}$ be a Hilbert $\mathscr{A}$-module and $x, y \in \mathscr{X}$.
Then the following conditions are equivalent:
\begin{itemize}
\item[(i)] $x\perp_{B}y$,
\item[(ii)] There exists a state $\varphi$ on $\mathscr{A}$ such that
$\varphi(\langle x, x\rangle) = \|x\|^2$ and $\varphi(\langle x, y\rangle) = 0$.
\end{itemize}
\end{lemma}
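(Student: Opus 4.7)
The plan is to treat the two directions separately. The implication $(ii)\Rightarrow(i)$ is a short Cauchy--Schwarz computation using (\ref{inequality-positive-functional}), while $(i)\Rightarrow(ii)$ is the substantive content and requires a compactness/separation argument on the state space of $\mathscr{A}$.

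For $(ii)\Rightarrow(i)$, the $x=0$ case is trivial, so assume $x\neq 0$ and let $\varphi$ be a state as in (ii). For any $\lambda\in\mathbb{C}$, $\mathbb{C}$-linearity in the second variable of the inner product gives $\varphi(\langle x,x+\lambda y\rangle)=\varphi(\langle x,x\rangle)+\lambda\varphi(\langle x,y\rangle)=\|x\|^2$. Substituting into (\ref{inequality-positive-functional}) and bounding $\varphi(\langle x+\lambda y,x+\lambda y\rangle)\le\|x+\lambda y\|^2$ yields $\|x\|^4\le\|x\|^2\,\|x+\lambda y\|^2$, i.e.\ $\|x\|\le\|x+\lambda y\|$ for every $\lambda$.

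For $(i)\Rightarrow(ii)$ (with $x\neq 0$), I would consider the set
\[
S:=\{\varphi\in\mathscr{A}^*:\varphi\text{ is a state and }\varphi(\langle x,x\rangle)=\|x\|^2\}.
\]
Since $\langle x,x\rangle$ is positive and its norm is attained by some state, $S$ is nonempty, convex and weak-$*$ compact. The affine continuous evaluation $\varphi\mapsto\varphi(\langle x,y\rangle)$ sends $S$ onto a convex compact subset $F(S)\subseteq\mathbb{C}$, and the desired conclusion is exactly $0\in F(S)$. I would argue by contradiction: if $0\notin F(S)$, separation of convex sets in $\mathbb{C}\simeq\mathbb{R}^2$ produces $\theta\in\mathbb{R}$ and $c>0$ with $\varphi(h)\ge c$ for every $\varphi\in S$, where $h:=\tfrac{1}{2}(e^{i\theta}\langle x,y\rangle+e^{-i\theta}\langle y,x\rangle)$ is self-adjoint.

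To refute this, I test Birkhoff--James at $\lambda=-te^{i\theta}$ for small $t>0$: the positive element $\langle x-te^{i\theta}y,x-te^{i\theta}y\rangle=\langle x,x\rangle-2th+t^2\langle y,y\rangle$ has norm at least $\|x\|^2$, so some state $\psi_t$ norms it. Expanding and combining with $\psi_t(\langle x,x\rangle)\le\|x\|^2$ produces both $\psi_t(h)\le\tfrac{t}{2}\|y\|^2$ and $\psi_t(\langle x,x\rangle)\ge\|x\|^2-2t\|h\|-t^2\|y\|^2$. Banach--Alaoglu then provides a weak-$*$ cluster point $\psi$ of $\{\psi_t\}$ as $t\to 0^+$, which inherits both bounds in the limit: $\psi(\langle x,x\rangle)=\|x\|^2$ (so $\psi\in S$) and $\psi(h)\le 0$, contradicting $\psi(h)\ge c>0$. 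The main obstacle is this compactness step: one must verify carefully that a cluster point of the states norming the \emph{perturbed} elements $\langle x+\lambda y,x+\lambda y\rangle$ still norms the unperturbed $\langle x,x\rangle$, i.e.\ lands in $S$ rather than merely in the full state space.
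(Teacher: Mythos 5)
The paper offers no proof of Lemma~\ref{L.2.0}: it is imported verbatim from \cite[Theorem~2.7]{A.R.1} and \cite[Theorem~4.4]{B.G}, so there is nothing internal to compare against, and the question is only whether your blind argument is sound --- it is, and it follows essentially the standard state-space compactness/separation route of those references. The direction (ii)$\Rightarrow$(i) via the Cauchy--Schwarz inequality \eqref{inequality-positive-functional} is exactly right. In (i)$\Rightarrow$(ii) the steps all check out: $S$ is nonempty because the norm of the positive element $\ip{x}{x}$ is attained at a state; strict separation of the compact convex set $F(S)\subseteq\C$ from $0$ yields $\theta$ and $c>0$ with $\fii(h)\geq c$ on $S$, where $h=\tfrac12\big(e^{i\theta}\ip{x}{y}+e^{-i\theta}\ip{y}{x}\big)$ is self-adjoint and $\fii(h)=\mathrm{Re}\big(e^{i\theta}\fii(\ip{x}{y})\big)$; and the two estimates $\psi_t(h)\leq\tfrac{t}{2}\|y\|^2$ and $\psi_t(\ip{x}{x})\geq\|x\|^2-2t\|h\|-t^2\|y\|^2$ do follow from $\psi_t$ norming $\ip{x-te^{i\theta}y}{x-te^{i\theta}y}$ together with $x\perp_B y$ and $\psi_t(\ip{x}{x})\leq\|x\|^2$. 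The only point worth making explicit --- and you correctly flag it as the delicate step --- is that for non-unital $\Aa$ the set of states is not weak-$*$ compact, so both the compactness of $S$ and the extraction of the cluster point $\psi$ should be carried out in the quasi-state space $\{\fii\geq0:\|\fii\|\leq1\}$, which is weak-$*$ compact by Banach--Alaoglu; your lower bound then gives $\psi(\ip{x}{x})=\|x\|^2$ in the limit, which (as $x\neq0$) forces $\|\psi\|=1$, so $\psi$ is a genuine state lying in $S$, and $\psi(h)\leq0$ contradicts $\psi(h)\geq c$. With that remark added, the proof is complete and correct.
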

%%%%%%%%%%%%%%%%%%%%%%%%%%%%%%%%
As another motivation for our work
in this section, recall the following results from the paper \cite{A.R.4}.
\begin{theorem}\label{theorem-arambasic-rajic-2019}\cite[Theorem 2, Corollary 1]{A.R.4}
Let $\mathscr{X}$ be a Hilbert $\mathscr{A}$-module and $x, y \in \mathscr{X}$.
Then the following conditions are equivalent:
\begin{itemize}
\item[(i)] $\forall_{a\in \mathscr{A}}\ \ \ \|x + ya\|=\|x - ya\|$,
\item[(ii)] $y\ip{y}{x}\bot_B x$,
\item[(iii)] $x\perp y$.
\end{itemize}
\end{theorem}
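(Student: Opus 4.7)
My plan is to exploit the fact that the equivalence (i)$\Leftrightarrow$(iii) is already recorded in the displayed list opening Section~2 (item~(iii) there is precisely $\forall_{a\in\Aa}\ \|x+ya\|=\|x-ya\|\Leftrightarrow x\perp y$). So the only real work is to splice condition (ii) into this chain, which I will do by proving (iii)$\Rightarrow$(ii) and (ii)$\Rightarrow$(iii). The first of these is instant: if $\langle x,y\rangle=0$ then $\langle y,x\rangle=\langle x,y\rangle^{*}=0$, hence $y\langle y,x\rangle$ is the zero vector of $\Xa$, and the zero vector is Birkhoff--James orthogonal to every element.

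The core of the theorem is (ii)$\Rightarrow$(iii), for which I plan to invoke Lemma~\ref{L.2.0}. Writing $c:=\langle y,x\rangle\in\Aa$, the axioms (C3)--(C4) immediately give
\[
\langle yc,x\rangle=c^{*}\langle y,x\rangle=c^{*}c,\qquad \langle yc,yc\rangle=c^{*}\langle y,y\rangle c,
\]
so that Lemma~\ref{L.2.0} applied to $yc\perp_{B}x$ produces a state $\varphi$ on $\Aa$ with
\[
\varphi(c^{*}\langle y,y\rangle c)=\|yc\|^{2}\quad\text{and}\quad \varphi(c^{*}c)=0.
\]
The pivot of the argument will be a Cauchy--Schwarz estimate for the positive functional $\varphi$, applied with the factorisation $a=c$, $b=\langle y,y\rangle c$; this yields
\[
\bigl|\varphi(c^{*}\langle y,y\rangle c)\bigr|^{2}\le\varphi(c^{*}c)\,\varphi\bigl(c^{*}\langle y,y\rangle^{2}c\bigr)=0.
\]
Combined with the norm-attainment identity, this collapses $\|yc\|^{2}$ to $0$, i.e.\ $y\langle y,x\rangle=0$ in $\Xa$.

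It will then remain to extract $\langle x,y\rangle=0$ from $y\langle y,x\rangle=0$, for which I will apply $\langle x,\cdot\rangle$ and use (C3): one gets $\langle x,y\rangle\langle y,x\rangle=dd^{*}=0$ with $d:=\langle x,y\rangle$, and the $C^{*}$-identity $\|dd^{*}\|=\|d\|^{2}$ forces $d=0$. The step I expect to be the main obstacle is spotting exactly the Cauchy--Schwarz factorisation above: one must route the factor $\langle y,y\rangle$ entirely into the $b$-slot so that the vanishing condition $\varphi(c^{*}c)=0$ from Lemma~\ref{L.2.0} can neutralise the norm-attainment condition on the opposite side. Once that matching is recognised, everything else reduces to elementary manipulation with the Hilbert $C^{*}$-module axioms.
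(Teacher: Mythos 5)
Your proposal is correct, but note that the paper itself offers no proof of this statement: it is imported wholesale from \cite{A.R.4}, so the only in-paper argument to compare against is the closely related Theorem~\ref{T.2.1}, whose proof the paper does give. Your core step (ii)$\Rightarrow$(iii) uses exactly the same mechanism as that proof: convert the Birkhoff--James relation into a state $\varphi$ via Lemma~\ref{L.2.0}, then annihilate the norm-attainment value $\|y\langle y,x\rangle\|^{2}$ by a Cauchy--Schwarz estimate in which the factor killed by $\varphi$ (here $\varphi(c^{*}c)=0$ with $c=\langle y,x\rangle$) sits alone in one slot. The paper's proof of Theorem~\ref{T.2.1} factors the relevant product as an inner product of $\langle x,u\rangle$ and $\langle x,y\rangle$ in $\Aa$, whereas you factor $c^{*}\langle y,y\rangle c$ as an inner product of $c$ and $\langle y,y\rangle c$; both are instances of \eqref{inequality-positive-functional} for $\Aa$ viewed as a module over itself, and both work. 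Your (iii)$\Rightarrow$(ii) and the final passage from $y\langle y,x\rangle=0$ to $\langle x,y\rangle=0$ via the $C^{*}$-identity are also correct (the paper performs the identical last step in Theorem~\ref{T.2.1}). One caveat: the equivalence (i)$\Leftrightarrow$(iii) that you import from the list opening Section~2 is precisely the substance of \cite[Theorem~2]{A.R.4}, i.e.\ of the statement being proved, so your argument is genuinely self-contained only for the (ii)$\Leftrightarrow$(iii) part and otherwise leans on the cited literature --- which is consistent with how the paper itself treats this theorem, but worth stating explicitly.
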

%%%%%%%%%%%%%%%%%%%%%%%%%%%
Now, we want to prove that the above result can be strengthen as follows.
\begin{theorem}\label{T.2.1}
Let $\mathscr{X}$ be a Hilbert $\mathscr{A}$-module and $x, y \in \mathscr{X}$.
Then the following conditions are equivalent:
\begin{itemize}
\item[(i)] $\forall_{a\in \mathscr{A}}\ \ \ |xa + y|^2 \geq |xa|^2$,
\item[(ii)] $\forall_{a\in \mathscr{A}}\ \ \ |xa + y| \geq |xa|$,
\item[(iii)] $\forall_{a\in \mathscr{A}}\ \ \ \|xa + y\| \geq \|xa\|$,
\item[(iv)] $x\perp y$.
\end{itemize}
Moreover, since the relation $\perp $ is symmetric, each of the above
conditions is equivalent to each of the three below:
\begin{itemize}
\item[(v)] $\forall_{a\in \mathscr{A}}\ \ \ |ya + x|^2 \geq |ya|^2$,
\item[(vi)] $\forall_{a\in \mathscr{A}}\ \ \ |ya + x| \geq |ya|$,
\item[(vii)] $\forall_{a\in \mathscr{A}}\ \ \ \|ya + x\| \geq \|ya\|$.
\end{itemize}
\end{theorem}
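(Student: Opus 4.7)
The plan is to establish the cycle $(iv)\Rightarrow(i)\Rightarrow(ii)\Rightarrow(iii)\Rightarrow(iv)$ and then derive (v)--(vii) from the symmetry of the inner product orthogonality.

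For the first three implications I expect short, essentially routine arguments. If $x\perp y$, then $\langle xa,y\rangle=a^{\ast}\langle x,y\rangle=0$, so $|xa+y|^{2}=|xa|^{2}+|y|^{2}\geq|xa|^{2}$, which proves (i). The step $(i)\Rightarrow(ii)$ is L\"owner--Heinz operator monotonicity of the square root on positive elements of $\Aa$. The step $(ii)\Rightarrow(iii)$ combines monotonicity of the $C^{\ast}$-norm on positive elements with the identity $\bigl\||u|\bigr\|=\|u\|$, which is immediate from the $C^{\ast}$-axiom and the definition $|u|=\langle u,u\rangle^{1/2}$.

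The main obstacle is the remaining implication $(iii)\Rightarrow(iv)$. My first reduction is to upgrade (iii) to $xa\perp_{B}y$ for every $a\in\Aa$: given $\lambda\in\C\setminus\{0\}$, applying (iii) with $a/\lambda$ in place of $a$ and multiplying by $|\lambda|$ yields $\|xa+\lambda y\|\geq\|xa\|$, and $\lambda=0$ is trivial. Lemma~\ref{L.2.0} then supplies, for each $a\in\Aa$, a state $\varphi_{a}$ on $\Aa$ with
\[
\varphi_{a}\bigl(a^{\ast}\langle x,x\rangle a\bigr)=\|xa\|^{2}\quad\text{and}\quad\varphi_{a}\bigl(a^{\ast}\langle x,y\rangle\bigr)=0.
\]
The decisive step is the choice $a:=\langle x,y\rangle$. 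Writing $c:=\langle x,y\rangle$, the second identity becomes $\varphi_{c}(c^{\ast}c)=0$, and the Cauchy--Schwarz inequality for the state $\varphi_{c}$ (a special case of \eqref{inequality-positive-functional}, obtained by regarding $\Aa$ as a Hilbert $\Aa$-module over itself) applied to $c$ and $\langle x,x\rangle c$ forces $\varphi_{c}\bigl(c^{\ast}\langle x,x\rangle c\bigr)=0$. Since this quantity equals $\|xc\|^{2}$, it follows that $x\langle x,y\rangle=0$. Taking the inner product with $y$ then gives
\[
\langle x,y\rangle^{\ast}\langle x,y\rangle=\langle y,x\rangle\langle x,y\rangle=\langle y,\,x\langle x,y\rangle\rangle=0,
\]
so $\langle x,y\rangle=0$, which is (iv).

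Finally, since the relation $\perp$ is symmetric in $x$ and $y$, the equivalence of (v)--(vii) with (iv) is obtained by applying the already proved equivalence (i)--(iv) with the roles of $x$ and $y$ interchanged.
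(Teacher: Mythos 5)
Your proposal is correct and follows essentially the same route as the paper: the easy implications via L\"owner--Heinz and norm monotonicity, and for (iii)$\Rightarrow$(iv) the reduction to $x\langle x,y\rangle\perp_{B}y$ (the paper substitutes $a=\langle x,y\rangle/\lambda$ directly, you first get $xa\perp_B y$ for all $a$ and then set $a=\langle x,y\rangle$), followed by Lemma~\ref{L.2.0} and the Cauchy--Schwarz inequality for the resulting state to force $x\langle x,y\rangle=0$ and hence $\langle x,y\rangle=0$. The differences are only in bookkeeping, not in substance.
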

\begin{proof}
(i)$\Rightarrow$(ii) The implication follows from Theorem~2.2.6 of \cite{Mu}.

(ii)$\Rightarrow$(iii) Let $a\in \mathscr{A}$ such that $|xa + y| \geq |xa|$.
By \cite[Theorem~2.2.5~(3)] {Mu} it follows that $\big\||xa + y|\big\| \geq \big\||xa|\big\|$.
Then $\|xa + y\| \geq \|xa\|$ since $\big\||z|\big\| = \|z\|$ for all $z\in\mathscr{X}$.

(iii)$\Rightarrow$(iv) Suppose (iii) holds. Let $\lambda \in \mathbb{C}\setminus\{0\}$
and let $a= \frac{\langle x, y\rangle}{\lambda} \in \mathscr{A}$. Then
\begin{align*}
\left\|x\frac{\langle x, y\rangle}{\lambda} + y\right\| \geq \left\|x\frac{\langle x, y\rangle}{\lambda}\right\|,
\end{align*}
and so
\begin{align*}
\big\|x\langle x, y\rangle + \lambda y\big\| \geq \big\|x\langle x, y\rangle\big\|.
\end{align*}
The above inequality is also obvious for $\lambda = 0$, and therefore
\begin{align*}
\forall_{\lambda\in\begin{scriptsize}\mathbb{C}\end{scriptsize}} \ \ \big\|x\langle x, y\rangle + \lambda y\big\| \geq \big\|x\langle x, y\rangle\big\|.
\end{align*}
Thus $x\langle x, y\rangle\perp_{B}y$. By Lemma~\ref{L.2.0}, there exists a state $\varphi$ on $\mathscr{A}$ such that
\begin{align*}
\varphi\Big(\big\langle x\langle x, y\rangle, x\langle x, y\rangle\big\rangle\Big) = \big\|x\langle x, y\rangle\big\|^2
\quad \mbox{and} \quad \varphi\Big(\big\langle x\langle x, y\rangle, y\big\rangle\Big) = 0,
\end{align*}
and equivalently,
\begin{align}\label{I.1.T.2.1}
\varphi\Big(\ip{x\ip{x}{y}}{x}\ip{x}{y}\Big) = \big\|x\langle x, y\rangle\big\|^2
\quad \mbox{and} \quad \varphi\Big(\langle y, x\rangle \langle x, y\rangle\Big) = 0.
\end{align}
Hence
\begin{align*}
0 &\leq\big\|x\langle x, y\rangle\big\|^4\stackrel{\eqref{I.1.T.2.1}}{=}
\left|\varphi\Big(\ip{x\ip{x}{y}}{x}\ip{x}{y}\Big)\right|^2\\
&\stackrel{\eqref{inequality-positive-functional}}{\leq}
\varphi\Big(\ip{x\ip{x}{y}}{x}\ip{x}{x\ip{x}{y}}\Big)\varphi\Big(\ip{y}{x}\ip{x}{y}\Big)
\stackrel{\eqref{I.1.T.2.1}}{=}0.
\end{align*}
Therefore, we get $\big\|x\langle x, y\rangle\big\|^4 = 0$,
which gives $x\langle x, y\rangle = 0$.
Then we have
\begin{align*}
\big\|\langle x, y\rangle\big\|^2 =\big\|\langle y, x\rangle\langle x, y\rangle\big\|
= \big\|\big\langle y, x\langle x, y\rangle\big\rangle\big\| = \|\langle y, 0\rangle\| = 0,
\end{align*}
and so $\|\langle x, y\rangle\big\| = 0$. Thus $\langle x, y\rangle = 0$. Hence $x\perp y$.

(iv)$\Rightarrow$(i) Let $x\perp y$. Then $\langle x, y\rangle = \langle y, x\rangle = 0$.
So, for every $a\in\mathscr{A}$, we have
\begin{align*}
|xa + y|^2 = \langle xa, xa\rangle + a^*\langle x, y\rangle + \langle y, x\rangle a + \langle y, y\rangle
= |xa|^2 + |y|^2,
\end{align*}
which implies $|xa + y|^2 \geq |xa|^2$.
Thus the implication (vi)$\Rightarrow$(i) is proved.

To summarize, we may consider (i)$\Leftrightarrow$(ii)$\Leftrightarrow$(iii)$\Leftrightarrow$(iv) as
shown. Since $\bot$ is symmetric, we see
that (iv)$\Leftrightarrow$(v)$\Leftrightarrow$(vi)$\Leftrightarrow$(vii), and the proof is complete.
\end{proof}
%%%%%%%%%%%%%%%%%%%
%%%%%%%%%%%%%%%%%%%
\section{On $\Aa$-linear mappings preserving orthogonality}
%%%%%%%%%%%%%%%%%%%%%%%%
Chmieli\'nski \cite{C} had proved that a linear mapping $T\colon X\to Y$ (between inner product
spaces) which preserves orthogonality
(i.e. $x\bot y\ \Rightarrow Tx\bot Ty$ for all $x,y\in X$) has to be a
similarity (scalar multiple of an isometry).

It has been proved by Koldobsky \cite{K} (for real normed spaces) and Blanco and Turn\v{s}ek
\cite{B.T} (for real and complex ones) that a linear mapping $T\colon X\to Y$ preserving
the Birkhoff--James orthogonality, i.e., satisfying $x\bot_B y \Rightarrow Tx\bot_B Ty$, has to be a similarity.
Another proof was presented by W\'ojcik \cite{W.2}. Moreover, he
extended the Koldobsky theorem. Namely, for real spaces he showed
that the linearity assumption may be replaced
by additivity (see \cite{W.2}).

We again focus on Hilbert $C^*$-modules. So, let us consider
now $\Aa$-linear mappings $T\colon \Xa\to\Ya$ (between Hilbert $\Aa$-modules $\Xa$ and $\Ya$) preserving
an orthogonality, i.e., satisfying $x\bot y \Rightarrow Tx\bot Ty$
with respect to given $\Aa$-valued inner
products $\ip{\cdot}{\cdot}\colon \Xa\times\Xa\to\Aa$ and $\ip{\cdot}{\cdot}\colon \Ya\times\Ya\to\Aa$.

Let $\Ka(H)$ and $\Ba(H)$ denote the $C^*$-algebra of all compact operators and, respectively, the $C^*$-algebra of
all bounded operators on a Hilbert space $H$.

Before proving the main theorems
of this section, let us look at a few
classical results. Ili\v{s}evi\'{c} and Turn\v{s}ek  \cite{I.T} obtained
the following result in the case where $\Aa$ is
a standard $C^*$-algebra, that is, when $\Ka(H)\subseteq\Aa\subseteq \Ba(H)$ (see also \cite{F.M.P}).
%%%%%%%%%%%%%%%%%%%%%%%%%%%%%%%
\begin{theorem}\cite{I.T}\label{theorem1-i-t-pres-orth}
Let $\Aa$ be a standard $C^*$-algebra.
Let $\mathscr{X}$ and $\mathscr{Y}$ be Hilbert $\mathscr{A}$-modules. For a
nonzero mapping $T\colon \Xa\to\Ya$, the following assertions are equivalent:

{\rm (i)} \ $T$ is $\Aa$-linear and orthogonality preserving mapping,

{\rm (ii)} \ $T$ is a similarity, i.e., $\exists_{\gamma>0} \ \forall_{x,y\in\Xa} \ \ \ip{Tx}{Ty}=\gamma\ip{x}{y}$.
\end{theorem}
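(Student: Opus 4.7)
The easy direction (ii)$\Rightarrow$(i) is a routine sesquilinear expansion: if $\ip{Tx}{Ty}=\gamma\ip{x}{y}$ for all $x,y\in\Xa$, then expanding $\ip{T(xa+y)-T(x)a-T(y)}{T(xa+y)-T(x)a-T(y)}$ by sesquilinearity and applying the hypothesis to each of the nine inner products makes the sum collapse to $\gamma\ip{0}{0}=0$; positivity of the $\Aa$-valued inner product forces $T$ to be $\Aa$-linear, and orthogonality preservation follows at once from the defining equation.

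For the non-trivial direction (i)$\Rightarrow$(ii) my plan is to \emph{localise} the problem to rank-one corners of $\Aa$ and then to \emph{glue} the resulting local similarity constants. For a unit vector $\eta\in H$, the rank-one projection $p:=\theta_{\eta,\eta}$ lies in $\Ka(H)\subseteq\Aa$ and the corner $p\Aa p=\C p$ is one-dimensional; hence the right submodule $\Xa p$, equipped with $\ip{xp}{yp}=p\ip{x}{y}p\in\C p$, is a genuine complex pre-Hilbert space after the canonical identification $\C p\cong\C$. Since $T(xp)=T(x)p$, the mapping $T$ restricts to a $\C$-linear, orthogonality preserving map $T_p\colon\Xa p\to\Ya p$, and Chmieli\'nski's classical theorem supplies a constant $\gamma_p\ge 0$ with
\[
p\,\ip{Tx}{Ty}\,p \;=\; \gamma_p\,p\,\ip{x}{y}\,p \qquad(x,y\in\Xa).
\]

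The key issue is to show that $\gamma_p$ is independent of $\eta$. For a second unit vector $\zeta\in H$, set $q:=\theta_{\zeta,\zeta}$ and $v:=\theta_{\eta,\zeta}\in\Ka(H)$; a direct check gives $vv^*=p$, $v^*v=q$, $pv=v$, $vq=v$. Right multiplication by $v^*$ and by $v$ then provides mutually inverse pre-Hilbert isometries between $\Xa q$ and $\Xa p$ (and likewise between $\Ya q$ and $\Ya p$), because $\ip{xv^*}{yv^*}=v\ip{x}{y}v^*$ sends $\mu q\in q\Aa q$ to $\mu p\in p\Aa p$. Since $T(xv^*)=T(x)v^*$ by $\Aa$-linearity, these isometries intertwine $T_q$ with $T_p$, forcing $\gamma_p=\gamma_q=:\gamma$. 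Consequently $p\,(\ip{Tx}{Ty}-\gamma\ip{x}{y})\,p=0$ for \emph{every} rank-one projection $p=\theta_{\eta,\eta}\in\Ka(H)$; applying both sides to $\eta$ yields $\ip{\eta}{(\ip{Tx}{Ty}-\gamma\ip{x}{y})\eta}=0$ for every unit $\eta\in H$, and complex polarisation in $\Ba(H)$ then gives $\ip{Tx}{Ty}=\gamma\ip{x}{y}$ throughout. The hypothesis $T\neq 0$ rules out $\gamma=0$.

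The principal obstacle is precisely this gluing step. A priori nothing ties the scalars $\gamma_p$ coming from distinct rank-one projections, and without the inclusion $\Ka(H)\subseteq\Aa$ the linking partial isometries $\theta_{\eta,\zeta}$ would not live inside $\Aa$; the local similarities would then remain unrelated and one would be stuck with a coordinatewise conclusion rather than the global equation required by (ii). It is exactly the richness of the compact operators inside a standard $C^*$-algebra that enforces a single global constant $\gamma$.
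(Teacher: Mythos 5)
Your argument is essentially correct, but there is nothing in the paper to compare it with: Theorem~\ref{theorem1-i-t-pres-orth} is stated as a quotation from Ili\v{s}evi\'{c} and Turn\v{s}ek \cite{I.T} and is used as a black box, with no proof supplied. Judged on its own, your localisation-and-gluing scheme is sound and in fact reconstructs the spirit of the original argument of \cite{I.T}, which likewise passes to the complex inner product spaces $\Xa p$ attached to minimal projections $p=\theta_{\eta,\eta}\in\Ka(H)$ and invokes the scalar theorem of Chmieli\'nski \cite{C}. Two small points should be patched. First, a standard $C^*$-algebra need not be unital, so in (ii)$\Rightarrow$(i) the single expansion of $\ip{T(xa+y)-T(x)a-T(y)}{T(xa+y)-T(x)a-T(y)}$ gives $T(0)=0$ and the module homogeneity $T(xa)=T(x)a$ (take $y=0$), but additivity needs the separate, identical expansion of $\ip{T(x+y)-Tx-Ty}{T(x+y)-Tx-Ty}$; this is a trivial repair. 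Second, the intertwining relation only yields $\gamma_p\ip{xv^*}{yv^*}=\gamma_q\ip{xv^*}{yv^*}$, which forces $\gamma_p=\gamma_q$ only when $\Xa q\neq\{0\}$; you should note that if $\Xa p=\{0\}$ for one rank-one projection $p$ then, by your linking isometries, $\Xa q=\{0\}$ for all of them, hence $xk=0$ for every compact $k$, hence $\ip{x}{y}k=\ip{x}{yk}=0$ for all $k\in\Ka(H)$ and so $\ip{x}{y}=0$ for all $x,y$, i.e.\ $\Xa=\{0\}$, contradicting $T\neq0$ (in the degenerate case $\Xa p=\{0\}$ the identity $p\bigl(\ip{Tx}{Ty}-\gamma\ip{x}{y}\bigr)p=0$ holds vacuously anyway). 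With these remarks in place, the concluding step --- $\langle\eta,S\eta\rangle=0$ for every $\eta\in H$ implies $S=0$ by complex polarisation --- is valid, and the hypothesis $T\neq0$ correctly excludes $\gamma=0$.
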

%%%%%%%%%%%%%%%%%%%%%%%%%
Recall that a linear mapping $T\colon \Xa\to\Ya$,
where $\Xa$ and $\Ya$ are Hilbert $\mathscr{A}$-modules, is called local if
\begin{align*}
\forall_{a\in\Aa} \ \forall_{x\in\Xa} \ \ xa = 0\, \Longrightarrow \,(Tx)a = 0.
\end{align*}
Note that every $\Aa$-linear mapping is local, but the converse is not true,
in general (take linear differential operators into account).
Let $\Ca_0(\Omega)$ denote the $C^*$-algebra of all continuous $\C$-valued functions
on a locally compact Hausdorff space $\Omega$, which vanish at $\infty$ with the supremum
norm. Leung, Ng and Wong \cite{L.N.W.2010} obtained
the following result in the case where $\Aa=\Ca_0(\Omega)$.
%%%%%%%%%%%%%%%%%%%%%%%%%%%%%%%
\begin{theorem}\cite{L.N.W.2010}\label{theorem2-l-n-w-pres-orth}
Let $\Omega$ be a locally compact Hausdorff space, and let $\Xa$ and $\Ya$ be two
Hilbert $\Ca_0(\Omega)$-modules. Suppose that $T\colon \Xa\to\Ya$ is an orthogonality preserving local
$\C$-linear map. The following assertions hold.

{\rm (i)} $T$ is $\Aa$-linear and orthogonality preserving mapping.

{\rm (ii)} $\forall_{x,y\in\Xa} \ip{Tx}{Ty}\!=\!\fii\ip{x}{y}$, for some bounded
nonnegative function $\fii$ on $\Omega$.
\end{theorem}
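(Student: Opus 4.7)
The strategy is to reduce to Chmieli\'nski's classical similarity theorem (mentioned just above in the section) by a fiberwise analysis, exploiting the continuous-field picture of Hilbert $\Ca_0(\Omega)$-modules. For each $t\in\Omega$, let $I_t := \{f\in\Ca_0(\Omega) : f(t)=0\}$ and introduce the complex Hilbert space fibers $\Xa_t := \Xa/\overline{\Xa I_t}$ and $\Ya_t := \Ya/\overline{\Ya I_t}$, carrying the inner products $\ip{x_t}{y_t}_t := \ip{x}{y}(t)$, where $x\mapsto x_t$ denotes the quotient map. I would first use the locality of $T$ to descend it to well-defined $\C$-linear maps $T_t\colon \Xa_t\to\Ya_t$ by $T_t(x_t):=(Tx)_t$. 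The nontrivial part is showing that $\ip{x}{x}(t)=0$ forces $\ip{Tx}{Tx}(t)=0$; using the Cohen--Hewitt / module polar factorization $x = y\,|x|^{1/2}$ with $|x|^{1/2}(t)=0$, and approximating $|x|^{1/2}$ by elements of $\Ca_0(\Omega)$ vanishing on open neighbourhoods of $t$, the local property of $T$ forces $(Tx)_t=0$.

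Next I would show that each $T_t$ preserves Hilbert space orthogonality on $\Xa_t$. Given $\xi,\eta\in\Xa_t$ with $\ip{\xi}{\eta}_t=0$, the goal is to realize them as restrictions of globally orthogonal sections, i.e.\ to produce lifts $x',y'\in\Xa$ with $x'_t=\xi$, $y'_t=\eta$ and $\ip{x'}{y'}=0$ on all of $\Omega$; this is achieved by taking arbitrary lifts and multiplying them by appropriate cutoff functions in $\Ca_0(\Omega)$ that separate their supports away from $t$ without altering the germs at $t$. The orthogonality preserving hypothesis on $T$ then yields $\ip{Tx'}{Ty'}=0$, hence $\ip{T_t\xi}{T_t\eta}_t=0$, so that $T_t$ is an orthogonality preserving $\C$-linear map between Hilbert spaces. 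Chmieli\'nski's theorem now supplies a constant $\fii(t)\ge 0$ with
\[
\ip{T_t\xi}{T_t\eta}_t \,=\, \fii(t)\,\ip{\xi}{\eta}_t,
\]
and reading this on representatives produces $\ip{Tx}{Ty}(t)=\fii(t)\ip{x}{y}(t)$ for every $t\in\Omega$ and every $x,y\in\Xa$, which is precisely statement (ii).

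For the boundedness of $\fii$, given any $x_0\in\Xa$ with $\ip{x_0}{x_0}$ not identically zero one has $\fii = \ip{Tx_0}{Tx_0}/\ip{x_0}{x_0}$ on the cozero set $U_{x_0}$; picking a family of $x_0$'s whose cozero sets cover $\Omega$ and invoking the automatic continuity of orthogonality preserving $\C$-linear maps makes $\fii$ continuous on $\Omega$ and bounded by a multiple of the operator bound of $T$. Statement (i) then drops out at the fiber level: since $T_t$ is $\C$-linear, $T_t(a(t)x_t)=a(t)T_t(x_t)$ for every $a\in\Ca_0(\Omega)$, which translates to $(T(xa)-(Tx)a)_t=0$ for every $t\in\Omega$; evaluating the $\Ca_0(\Omega)$-valued inner product $\ip{T(xa)-(Tx)a}{T(xa)-(Tx)a}$ at each $t$ gives zero, so $T(xa)=(Tx)a$, establishing $\Aa$-linearity. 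The principal obstacle is the local-to-global patching step in the second paragraph: one must genuinely transfer fiberwise orthogonality to global orthogonality by cutting off sections in $\Ca_0(\Omega)$, and success depends on the abundance of such cutoff functions and the continuous-field structure. Once this step is in hand, the remainder of the argument is a routine combination of Chmieli\'nski's theorem with fiberwise bookkeeping.
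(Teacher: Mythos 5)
First, a point of comparison that matters here: the paper offers no proof of this statement at all --- it is quoted, with citation, from Leung, Ng and Wong \cite{L.N.W.2010} --- so there is no internal argument to measure yours against. Your fiberwise localization strategy is in the right spirit (the original work does analyse the fibers $\Xa_t$ of the associated Hilbert bundle and reduce to the Hilbert-space similarity theorem), but as written it has two genuine gaps. The first is the descent of $T$ to the fibers. The kernel of $x\mapsto x_t$ is $\{x\in\Xa:\ip{x}{x}(t)=0\}=\Xa I_t$ (Cohen--Hewitt), and to show $T$ maps it into the corresponding submodule of $\Ya$ you factor $x=ya$ with $a(t)=0$ and approximate $a$ by functions vanishing on neighbourhoods of $t$; locality handles the term $T(ya_1)$ with $a_1$ vanishing near $t$, but controlling the remainder $T(ya_2)$ with $\|a_2\|$ small requires continuity of $T$, which is not a hypothesis (the paper explicitly stresses, just before Theorem~\ref{theorem3-l-n-w-pres-orth}, that continuity is not assumed in these results). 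Invoking ``automatic continuity of orthogonality preserving maps'' later in the argument is circular at that point, since continuity of $T$ and the uniform boundedness of $\fii(t)=\|T_t\|^2$ over $t\in\Omega$ are consequences of, not inputs to, the structure you are trying to establish.

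The second gap is the local-to-global step you yourself flag as the principal obstacle, and it does not work as described. If $\ip{\xi}{\eta}_t=0$ and $x,y$ are lifts, any cutoffs that preserve the germs at $t$ must equal $1$ on a neighbourhood of $t$, hence leave $\ip{x}{y}$ unchanged there; separating supports away from $t$ therefore cannot produce globally orthogonal lifts unless $\ip{x}{y}$ already vanished near $t$. What is needed is a Gram--Schmidt-type correction such as $y'=y-x\,\ip{x}{x}^{-1}\ip{x}{y}$ on a neighbourhood where $\ip{x}{x}$ is bounded below, followed by a cutoff --- and even then one must remember that $T$ is not yet known to be $\Aa$-linear, so $T(y'\chi)$ and $(Ty')\chi$ cannot be identified when the conclusion is transported back to $T_t$. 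Until these two steps are repaired the argument does not close; the remaining bookkeeping (deducing (i) from fiberwise $\C$-linearity once the descent is available, and the polarization identity for $\fii$) is routine.
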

%%%%%%%%%%%%%%%%%%%%%%%%%%%%%
The same mathematicians extended their results and they
obtain another strong theorem \cite{L.N.W}. It is worth mentioning
that they did not assume again continuity of a mapping $T\colon \Xa\to\Ya$.
%%%%%%%%%%%%%%%%%%%%%%%%%%%%
\begin{theorem}\cite{L.N.W}\label{theorem3-l-n-w-pres-orth}
Let $\mathscr{X}$ and $\mathscr{Y}$ be Hilbert $\mathscr{A}$-modules. For a
nonzero mapping $T\colon \Xa\to\Ya$, the following assertions are equivalent:

{\rm (i)} \ $T$ is $\Aa$-linear and orthogonality preserving mapping,

{\rm (ii)} \ $T$ is a $\Aa$-similarity, i.e., $\exists_{c\in \Aa} \ \forall_{x,y\in\Xa} \ \ \ip{Tx}{Ty}=c\ip{x}{y}$.
\end{theorem}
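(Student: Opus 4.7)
The plan is to prove (i)$\Leftrightarrow$(ii). The direction (ii)$\Rightarrow$(i) will follow fairly directly from the defining identity $\ip{Tx}{Ty}=c\ip{x}{y}$: orthogonality preservation is immediate by setting $\ip{x}{y}=0$, and $\Aa$-linearity should be extracted from the observation that $\ip{T(xa+yb)-(Tx)a-(Ty)b}{Tz}=0$ for every $z\in\Xa$, together with the nonzero assumption on $T$ and suitable nondegeneracy of the $\Aa$-valued inner product restricted to $T(\Xa)$.

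For the substantive direction (i)$\Rightarrow$(ii), my strategy is to first produce the scalar $c$ pointwise and then to show that it is independent of the chosen point. In step one, assume there exists $x_0\in\Xa$ for which $\ip{x_0}{x_0}$ admits an inverse in $\Aa$ (or in the multiplier algebra $M(\Aa)$). For any $y\in\Xa$, set $a:=\ip{x_0}{x_0}^{-1}\ip{x_0}{y}$. A direct computation using (C2)--(C4) gives $\ip{x_0}{y-x_0 a}=0$, so $x_0\perp(y-x_0 a)$. Orthogonality preservation together with the $\Aa$-linearity of $T$ then yields
\begin{align*}
0=\ip{Tx_0}{T(y-x_0a)}=\ip{Tx_0}{Ty}-\ip{Tx_0}{Tx_0}a,
\end{align*}
so $\ip{Tx_0}{Ty}=c(x_0)\ip{x_0}{y}$ with $c(x_0):=\ip{Tx_0}{Tx_0}\ip{x_0}{x_0}^{-1}$.

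Step two is to show that $c(x_0)$ does not depend on the choice of $x_0$. Applying step one with the roles of two such points $x_1$ and $x_2$ exchanged gives the identity $c(x_1)\ip{x_1}{x_2}=\ip{Tx_1}{Tx_2}=\ip{x_1}{x_2}c(x_2)^*$; combining this with what step one produces for combinations of the form $x_1+x_2 b$ (again using the $\Aa$-linearity of $T$) should force $c(x_1)=c(x_2)$. This is the most delicate part of the proof, because $\Aa$ need not be commutative and the bare equation $c(x_1)\ip{x_1}{x_2}=\ip{x_1}{x_2}c(x_2)^*$ does not by itself identify $c(x_1)$ with $c(x_2)$; a supply of ``many'' test pairs $(x_1,x_2)$ is needed.

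Step three handles the general setting, where no $x\in\Xa$ need have invertible $\ip{x}{x}$. Here the plan is to work inside $M(\Aa)$, to replace $\ip{x}{x}$ by the genuinely invertible element $\ip{x}{x}+\varepsilon 1$ for $\varepsilon>0$, and to define $c$ as the limit of $\ip{Tx}{Tx}(\ip{x}{x}+\varepsilon 1)^{-1}$ along an approximate identity of $\Aa$, then verify that the limit lives in $\Aa$ and that the resulting identity $\ip{Tx}{Ty}=c\ip{x}{y}$ holds on all of $\Xa\times\Xa$. I expect step two (global constancy of $c$) to be the main obstacle, with step three being a technical but manageable wrap-up.
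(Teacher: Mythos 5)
This statement is quoted from Leung--Ng--Wong \cite{L.N.W}; the paper offers no proof of it, so your proposal can only be judged on its own terms. Your step one is correct: if $\ip{x_0}{x_0}$ is invertible, then $y-x_0\ip{x_0}{x_0}^{-1}\ip{x_0}{y}\perp x_0$, and $\Aa$-linearity plus orthogonality preservation give $\ip{Tx_0}{Ty}=c(x_0)\ip{x_0}{y}$ with $c(x_0)=\ip{Tx_0}{Tx_0}\ip{x_0}{x_0}^{-1}$. But this is the easy, standard localization trick (it is essentially the same move the paper makes in its own Theorem~\ref{main-1-result-form-pres} and Theorem~\ref{main-2-result-form-pres}); the entire content of the Leung--Ng--Wong theorem sits in your steps two and three, and neither is carried out. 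In step two you correctly observe that $c(x_1)\ip{x_1}{x_2}=\ip{x_1}{x_2}c(x_2)^*$ does not identify $c(x_1)$ with $c(x_2)$ in a noncommutative $\Aa$, and you stop at ``a supply of many test pairs is needed.'' That is the gap, not a remark about the gap: the published proof has to show that each local multiplier is central and self-adjoint, pass to irreducible representations (equivalently, to the centre of the multiplier algebra of the ideal $I=\overline{{\rm span}}\,\ip{\Xa}{\Xa}$) and patch the local data there; the element $c$ one obtains naturally lives in $Z(M(I))$, not in $\Aa$, which is a further issue your outline does not address. In step three, the limit of $\ip{Tx}{Tx}(\ip{x}{x}+\varepsilon 1)^{-1}$ as $\varepsilon\to 0$ has no reason to exist in norm; its existence (even in the strict topology) is essentially equivalent to the conclusion you are trying to prove. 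So the proposal is a plan whose two load-bearing steps are missing.

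A smaller but real gap is in (ii)$\Rightarrow$(i). From $\ip{Tx}{Ty}=c\ip{x}{y}$ alone, the expansion of $\big\langle T(xa)-(Tx)a,\,T(xa)-(Tx)a\big\rangle$ leaves the term $a^*c\ip{x}{x}a-a^*\ip{x}{x}a\,c^*$, which does not vanish until you first prove $cb=bc^*$ for all $b$ in the ideal generated by the inner products (take adjoints in the defining identity and use $\ip{y}{x}^*=\ip{x}{y}$). Your alternative route via $\ip{T(xa+yb)-(Tx)a-(Ty)b}{Tz}=0$ runs into the same commutation problem: the terms $ca^*\ip{x}{z}$ and $a^*c\ip{x}{z}$ must be reconciled. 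This is fixable in a few lines, but as written the ``immediate'' derivation of $\Aa$-linearity from (ii) is not immediate.
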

%%%%%%%%%%%%%%%%%%%%%%%%%%%%
The above results have motivated our work, which is
devoted to multi-$\Aa$-linear mappings defined on the Cartesian product of Hilbert $C^*$-modules.
They will be essentially generalized.
In particular, we will replace $\Aa$-valued inner products
by multi-$\Aa$-linear functionals.

If $\Xa$ is a right $\Aa$-module, a function $F\colon \Xa^n\to\Aa$
is a {\em multi-$\Aa$-linear function} if
for $x_1,\ldots,x_j,\ldots,x_n,y_j\in \Xa$ and $\alpha\in\C$ and $a\in\Aa$ and $j=1,\ldots,n$,

\textmd{(A)} \ \ $F(x_1,\ldots,x_j+y_j,\ldots,x_n)=F(x_1,\ldots,x_j,\ldots,x_n)+F(x_1,\ldots,y_j,\ldots,x_n)$,

\textmd{(B)} \ \ $F(x_1,\ldots,\alpha x_j a,\ldots,x_n)=\alpha F(x_1,\ldots,x_j,\ldots,x_n)a$, if $j=2k$,

\textmd{(C)} \ \ $F(x_1,\ldots,\alpha x_j a,\ldots,x_n)=\overline{\alpha} a^*F(x_1,\ldots,x_j,\ldots,x_n)$, if $j=2k+1$.
\\
Now suppose, in addition, that $\Xa$ is a normed
space. A multi-$\Aa$-linear function $F\colon\Xa^n\to\Aa$ is \textit{bounded} if there is
a constant $M$ such that
\begin{center}
$\|F(x_1,\ldots,x_n)\|\leq M\!\cdot\!\|x_1\|\cdot\ldots\cdot\|x_n\|$ for all $x_1,\ldots,x_n\in\Xa$.
\end{center}

Let $\ip{\cdot}{\cdot}\colon \Xa\times\Xa\to\Aa$ and $\ip{\cdot}{\cdot}\colon \Ya\times\Ya\to\Aa$ be
$\Aa$-valued inner products.
If $T\colon \Xa\to\Ya$ is $\Aa$-linear, then a function $F\colon \Xa^2\to\Aa$ defined
by $F(x,y):=\ip{Tx}{Ty}$ is a multi-$\Aa$-linear mapping. Define $E\colon \Xa^2\to\Aa$ by $E(x,y):=\ip{x}{y}$.
Then we can rewrite the orthogonality preserving property in the form:
\begin{align}\label{pattern-orth-pres-prop}
\forall_{x,y\in\Xa}\quad E(x,y)=0\, \Longrightarrow\, F(x,y)=0.
\end{align}
Summarizing, the condition \eqref{pattern-orth-pres-prop} will be the key notion of this section.

Suppose that $\Aa$ has an identity. Put $G_{\Aa}:=\{b\in \Aa:  b\ {\rm is\ invertible}\}$. It is known
that $G_{\Aa}$ are open subsets of $\Aa$.
We say that a multi-$\Aa$-linear function $F\colon \Xa^n\to\Aa$ is \textit{strong}, if there
exists $w\in\Xa$ such that $F(w,w,\ldots,w)\in G_{\Aa}$.
Consider $\Aa'_m:=\big\{\fii\colon\Aa\to\C\, |\, \fii\ {\rm is\ linear\ and\ multiplicative}\big\}$.
We include the following useful property about multiplicative functionals. This basic fact
perhaps appears elsewhere but we present its with proof for sake of completeness.
%%%%%%%%%%%%%%%%%%%%%%%
\begin{lemma}\label{theorem-tot-impl-ab}
Let $\Aa$ be a $C^*$-algebra. If the set $\Aa'_m$ is total, i.e.,
\begin{center}
$\left(\forall_{\fii\in \Aa'_m} \ \ \fii(a)=0\right) \Longrightarrow a=0$,
\end{center}
then $\Aa$ is abelian.
\end{lemma}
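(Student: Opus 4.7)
The plan is to reduce the whole statement to a one-line computation: every commutator $ab-ba$ must lie in the kernel of every $\varphi\in\mathscr{A}'_m$, and then totality kills it. The point is that the target field $\mathbb{C}$ is commutative, so multiplicative linear functionals are automatically ``commutator-killing'' even before any $C^*$-structure is invoked.

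First I would fix arbitrary $a,b\in\mathscr{A}$ and any $\varphi\in\mathscr{A}'_m$. Using linearity in the first equality and multiplicativity together with the commutativity of $\mathbb{C}$ in the second, I would write
\[
\varphi(ab-ba)=\varphi(ab)-\varphi(ba)=\varphi(a)\varphi(b)-\varphi(b)\varphi(a)=0.
\]
Since the chosen $\varphi\in\mathscr{A}'_m$ was arbitrary, this shows that $ab-ba$ annihilates the whole of $\mathscr{A}'_m$.

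Second, I would invoke the totality hypothesis: the implication $\bigl(\forall\varphi\in\mathscr{A}'_m\ \varphi(c)=0\bigr)\Rightarrow c=0$ applied to $c:=ab-ba$ gives $ab=ba$. As $a,b\in\mathscr{A}$ were arbitrary, $\mathscr{A}$ is abelian.

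There is essentially no obstacle; the only conceptual thing to flag is that we do not need $\mathscr{A}$ to be unital, and we do not need any $C^*$-structure beyond the ambient definition of $\mathscr{A}'_m$ — the entire argument is algebraic and uses only that $\mathbb{C}$ is a commutative ring and that the functionals are multiplicative and linear. Accordingly the proof will be stated as two short lines.
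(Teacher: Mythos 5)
Your proof is correct and is essentially the same as the paper's: both compute $\fii(ab-ba)=\fii(a)\fii(b)-\fii(b)\fii(a)=0$ for every $\fii\in\Aa'_m$ and then invoke totality to conclude $ab=ba$. Nothing further is needed.
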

\begin{proof}
Fix $a, b\in\Aa$ and $\fii\in\Aa'_m$. Then $\fii(ab - ba)=\fii(a)\fii(b) -\fii(b)\fii(a)=0$.
Since $\Aa'_m$ is total, $ab - ba=0$. Thus $ab = ba$.
\end{proof}
%%%%%%%%%%%%%%%%%%%%%%%%%
Finally, we are ready to present that Theorems \ref{theorem1-i-t-pres-orth}, \ref{theorem2-l-n-w-pres-orth} and
\ref{theorem3-l-n-w-pres-orth} can be extended for the case that $\Aa$ is abelian.
These will not be only shallow and cosmetic changes.
We will show new methods to tackle this kind of
theorem. So, we state our first main result of this section.
%%%%%%%%%%%%%%%%%%%%%%%%%%%%%%%%%%%%%
\begin{theorem}\label{main-2-result-form-pres}
Let $\Aa$ be a $C^*$-algebra with identity. Assume that $\Aa'_m$ is
a total set. Let $\mathscr{X}$ be a right $\mathscr{A}$-module, and suppose that $\Xa$ be a normed space.
Let $E\colon \Xa^2\to\Aa$ and $F\colon \Xa^2\to\Aa$ be
multi-$\Aa$-linear functions. Moreover, assume that $E$ is bounded and strong.
The following assertions are equivalent:

{\rm (i)} \ $\forall_{x, y\in\Xa}\quad E(x, y)=0\, \Longrightarrow\, F(x, y)=0$,

{\rm (ii)} \ $\exists_{c\in\Aa}\ \forall_{x, y\in\Xa}\quad F(x, y)=cE(x, y)$.\\
Moreover, each of the above conditions implies:

{\rm (iii)} $F$ is bounded.
\end{theorem}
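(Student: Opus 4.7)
The implication (ii)$\Rightarrow$(i) is immediate, and, granted (ii), the conclusion (iii) follows at once from $\|F(x,y)\|=\|cE(x,y)\|\le\|c\|M\|x\|\|y\|$. So all the content is in (i)$\Rightarrow$(ii). The plan is to use the strong element $w\in\Xa$ (with $E(w,w)\in G_\Aa$) as a distinguished ``coordinate'', set
\[
c:=F(w,w)E(w,w)^{-1}\in\Aa,
\]
and establish $F(x,y)=cE(x,y)$ for all $x,y\in\Xa$ in three stages. Throughout I would exploit the fact that totality of $\Aa'_m$ forces $\Aa$ to be abelian (Lemma~\ref{theorem-tot-impl-ab}); this commutativity is decisive and is where the main subtlety sits.

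Stage one fixes the first slot. For any $x\in\Xa$, put $b:=E(w,w)^{-1}E(w,x)$ and $z:=x-wb$; then rule (B) gives $E(w,z)=E(w,x)-E(w,w)b=0$, so by (i), $F(w,z)=0$, hence $F(w,x)=F(w,w)b=cE(w,x)$. Stage two fixes the second slot. For any $u\in\Xa$, choose $b\in\Aa$ so that $E(u-wb,w)=0$; rule (C) forces $b^*=E(u,w)E(w,w)^{-1}$, and hypothesis (i) then yields $F(u,w)=b^*F(w,w)=E(u,w)E(w,w)^{-1}F(w,w)$. Here I would invoke abelianness to commute the two rightmost factors and obtain $F(u,w)=cE(u,w)$. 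This step is the crux: without commutativity, $F(w,w)E(w,w)^{-1}$ and $E(w,w)^{-1}F(w,w)$ are different elements of $\Aa$ and no single ``scalar'' $c$ can work simultaneously from the left in stage one and from the right in stage two.

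Stage three has two parts. First, for $u$ with $E(u,w)\in G_\Aa$, I repeat the stage-one trick in the second slot: for any $y$, set $b:=E(u,w)^{-1}E(u,y)$ and $z:=y-wb$; then $E(u,z)=0$, so $F(u,z)=0$, and combined with stage two this gives $F(u,y)=F(u,w)b=cE(u,y)$. Second, to remove the invertibility assumption, I would use the perturbation
\[
E(u+\lambda w,w)=E(u,w)+\overline{\lambda}E(w,w)=\bigl(E(u,w)E(w,w)^{-1}+\overline{\lambda}\bigr)E(w,w),
\]
which lies in $G_\Aa$ whenever $-\overline{\lambda}$ avoids the spectrum of $E(u,w)E(w,w)^{-1}$, in particular for $|\lambda|$ larger than its spectral radius. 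Applying the first part of stage three to $u+\lambda w$ for such a $\lambda$, expanding both sides by multi-$\Aa$-linearity, and using $F(w,y)=cE(w,y)$ from stage one to cancel the $\overline{\lambda}$-terms, we arrive at $F(u,y)=cE(u,y)$, which completes (ii).
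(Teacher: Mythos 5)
Your proof is correct, and it takes a genuinely different route from the paper's. The paper's argument is analytic: it uses the boundedness of $E$ and the openness of $G_{\Aa}$ to produce an open ball $B(w,\beta)$ on which $E(z,z)$ is invertible, defines $h(z):=F(z,z)E(z,z)^{-1}$ there, shows that $\fii\circ h$ has vanishing directional derivatives for every $\fii\in\Aa'_m$ (hence, by totality, $h$ is constant on the ball), and then extends the identity $F(\cdot,y)=cE(\cdot,y)$ from the ball to all of $\Xa$ by noting that a linear operator constant on an open set is constant everywhere. Your argument is purely algebraic: anchored at the single strong point $w$, you eliminate slot by slot, and you dispose of the case $E(u,w)\notin G_{\Aa}$ by the spectral perturbation $u\mapsto u+\lambda w$ (choosing $|\lambda|$ beyond the spectral radius of $E(u,w)E(w,w)^{-1}$) rather than by a neighbourhood argument; your uses of rules (B) and (C) in the two slots, and of Lemma~\ref{theorem-tot-impl-ab} to commute $E(u,w)E(w,w)^{-1}F(w,w)$ into $cE(u,w)$, all check out. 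What your approach buys is economy of hypotheses: neither the norm on $\Xa$ nor the boundedness of $E$ is needed for the equivalence of (i) and (ii) --- they enter only in deducing (iii) --- so you in fact prove the equivalence assuming only that $E$ is strong. What the paper's approach buys is that it produces the constant $c$ locally around every point of an open ball rather than only at the anchor $w$, which is the form the authors reuse in the inductive step of the $n$-variable version (Theorem~\ref{main-n-result-form-pres}); adapting your anchor-based elimination to that setting would require additional bookkeeping.
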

\begin{proof}
The implication (ii)$\Rightarrow$(i) is obvious. We show (i)$\Rightarrow$(ii).
Suppose (i) holds. We may assume without loss of generality that $E$ is a bounded multi-$\Aa$-linear function with bound
$M=1$, i.e. $\|E(x, y)\|\leq\|x\|\!\cdot\!\|y\|$ for all $x, y\in\Xa$.

Since $E$ is strong, there exists $w\in\Xa$ such that $E(w,w)\in G_{\Aa}$. It follows
easily that $w\neq 0$. Put $p:=E(w,w)$.
Since $G_{\Aa}$ is open, there is an open ball
$B(p,r)\subseteq\Aa$ such that $B(p,r) \subseteq G_{\Aa}$ with some $r>0$.
Now there is a small number $\beta>0$ such that $(\beta+2\|w\|)\cdot\beta< r$.
Let us consider an open
ball $B\left(w,\beta\right)\subseteq\Xa$. Fix $x\in B\left(w,\beta\right)$.
Thus we have
\begin{align*}
\| E(x,x)-p\|&=\| E(x,x) - E(w,w)\|\\
&=\| E(x,x)-E(x,w)+E(x,w) - E(w,w)\|\\
&= \|E(x,x\!-\!w)+E(x\!-\!w,w)\|\\
&\leq \|E(x,x\!-\!w)\|+ \|E(x\!-\!w,w)\|\\
&\leq\|x\|\cdot\|x-w\|+\|x-w\|\cdot\|w\|=(\|x\|+\|w\|)\cdot\|x-w\|\\
&\leq(\|x-w\|+\|w\|+\|w\|)\cdot\|x-w\|<(\beta+2\|w\|)\cdot\beta< r.
\end{align*}
Hence $E(x,x)\in B(p,r) \subseteq G_{\Aa}$.
In other words, we proved the following property:
\begin{align}\label{property-invertibility}
x\in B\left(w,\beta\right) \ \ \Longrightarrow \ \ E(x,x) \ {\rm is\ invertible}.
\end{align}
Thus, we may define $h\colon B\left(w,\beta\right)\to\Aa$ by $h(z):=F(z,z)E(z,z)^{-1}$.

Fix $y\in \Xa$ and fix $z\in B\left(w,\beta\right)$. Since $E(z,z)$ is invertible, we may
define
\begin{align*}
b:=-E(z,z)^{-1}E(z,y)\quad \mbox{and} \quad d:=\left(-E(z,z)^{-1}E(y,z)\right)^*.
\end{align*}
It is clear
that $E(z,zb+y)=0$ and $E(zd+y,z)=0$. From (i) it follows
that $F(z,zb+y)=0$ and $F(zd+y,z)=0$. We thus
get $F(z,y)=-F(z,z)b$ and
similarly $F(y,z)=-d^*F(z,z)$. Lemma \ref{theorem-tot-impl-ab} shows
that $\Aa$ is abelian. Hence
\begin{align}\label{FE-equality-1}
\begin{array}{ll}F(z,y)=F(z,z)E(z,z)^{-1}E(z,y)
& {\rm and}\\
F(y,z)=F(z,z)E(z,z)^{-1}E(y,z)& {\rm for}\ z\in B\left(w,\beta\right), \ y\in\Xa.
\end{array}
\end{align}
Since $B\left(w,\beta\right)$ is open, it follows that there is $t_o>0$ such
that $z+ty\in B\left(w,\beta\right)$ for
all $t\in (-t_o,t_o)$. Hence, by \eqref{property-invertibility}, $E(z+ty,z+ty)$ is invertible
for all $t\in (-t_o,t_o)$.
So, fix a number $t\in (-t_o,t_o)\setminus\{0\}$. Therefore, we may write
\begin{align*}
h(z+ty)-h(z)&=F(z+ty,z+ty)E(z+ty,z+ty)^{-1}-F(z,z)E(z,z)^{-1}\\
&=E(z+ty,z+ty)^{-1}E(z,z)^{-1}\\
&\quad\quad\Big(E(z,z)\big(F(z+ty,z+ty)-F(z,z)\big)\Big.\\
&\quad\quad\quad -\Big.F(z,z)\big(E(z+ty,z+ty)-E(z,z)\big)\Big)\\
&=E(z+ty,z+ty)^{-1}E(z,z)^{-1}\\
&\quad\quad\Big(E(z,z)\big(tF(y,z)+tF(z,y)+t^2F(y,y)\big)\Big.\\
&\quad\quad\quad -\Big.F(z,z)\big(tE(y,z)+tE(z,y)+t^2E(y,y)\big)\Big).
\end{align*}
Fix $\fii\in\Aa'_m$. It follows from the above equalities that
\begin{align*}
\frac{\fii(h(z+ty))-\fii(h(z))}{t}&=\fii\Big(E(z+ty,z+ty)^{-1}E(z,z)^{-1}\Big)\\
&\quad\quad\Big[\fii\Big(E(z,z)\big(F(y,z)+F(z,y)+tF(y,y)\big)\Big)\\
&\quad\quad\quad -\fii\Big(F(z,z)\big(E(y,z)+E(z,y)+tE(y,y)\big)\Big)\Big].
\end{align*}
From this it may be conclude that
\begin{align*}
\lim\limits_{t\rightarrow 0}\frac{\fii(h(z+ty))-\fii(h(z))}{t}&=\fii\Big(E(z,z)^{-1}E(z,z)^{-1}\Big)\\
&\quad\quad\Big[\fii\Big(E(z,z)\big(F(y,z)+F(z,y)\big)\Big)\\
&\quad\quad\quad -\fii\Big(F(z,z)\big(E(y,z)+E(z,y)\big)\Big)\Big].
\end{align*}
Since $\fii$ is multiplicative (and linear), we have
\begin{align*}
\lim\limits_{t\rightarrow 0}\frac{\fii(h(z+ty))-\fii(h(z))}{t}&=\fii\big(E(z,z)^{-1}\big)\fii\Big(F(z,y)-E(z,z)^{-1}F(z,z)E(z,y)\Big)\\
&+\fii\big(E(z,z)^{-1}\big)\fii\Big(F(y,z)-E(z,z)^{-1}F(z,z)E(y,z)\Big)\\
&\stackrel{\eqref{FE-equality-1}}{=}\fii\big(E(z,z)^{-1}\big) 0+\fii\big(E(z,z)^{-1}\big) 0=0.
\end{align*}
From this we conclude that $\fii\circ h$ is a constant
function. Since $\Aa_m'$ is total, $h\colon B\left(w,\beta\right)\to\Aa$ is a constant
function, too.
So, we define $c:=h(\cdot)$.
Putting $c$ in place of $F(z,z)E(z,z)^{-1}$ in the first equality \eqref{FE-equality-1} we get
\begin{align}\label{FE-equality-2}
F(z,y)=cE(z,y)\ {\rm for} \ z\in B\left(w,\beta\right), \ y\in\Xa.
\end{align}

Fix $y\in \Xa$. Let $L_y\colon\Xa\to\Aa$ be a linear
operator defined by the formula
\begin{align*}
L_y(\cdot):=F(\cdot,y)-cE(\cdot,y).
\end{align*}
It is worth mentioning that the continuity of $L_y$ is not needed; more precisely:
we do not know that $F$ is bounded.
The condition \eqref{FE-equality-2} says that $L_y$ is constant on
the open set $B\left(w,\beta\right)$. Therefore the linear operator $L_y$ is constant
on the whole space $\Xa$.
Now it is clear that $F(z,y)-cE(z,y)=0$ for all $z\in\Xa$.
Since $y$ was arbitrarily chosen from $\Xa$, we
obtain $F(z,y)-cE(z,y)=0$ for all $z,y\in\Xa$. But now we know that $F$ is bounded.
\end{proof}
%%%%%%%%%%%%%%%%%%%%%%%%%%
The above results are similar to that of \cite{W-2015}. But
the investigation \cite{W-2015} was easier, because the
paper \cite{W-2015} contains only the cases $\Aa=\C$ and $n=2$.
%%%%%%%%%%%%%%%%%%%%%%%%
\begin{theorem}\label{main-n-result-form-pres}
Let $\Aa$ be a $C^*$-algebra with identity. Assume that $\Aa'_m$ is a total set.
Let $\mathscr{X}$ be a right $\mathscr{A}$-module, and suppose that $\Xa$ be a normed space.
Let $n\geq 2$ and let $E\colon \Xa^n\to\Aa$ and $F\colon \Xa^n\to\Aa$ be
multi-$\Aa$-linear functions. Moreover, assume that $E$ is bounded and strong.
The following assertions are equivalent:

{\rm (i)} \ $\forall_{x_1,\ldots,x_n\in\Xa}\quad E(x_1,\ldots,x_n)=0\, \Longrightarrow\, F(x_1,\ldots,x_n)=0$,

{\rm (ii)} \ $\exists_{c\in\Aa}\ \forall_{x_1,\ldots,x_n\in\Xa}\quad F(x_1,\ldots,x_n)=cE(x_1,\ldots,x_n)$.\\
Moreover, each of the above conditions implies:

{\rm (iii)} $F$ is bounded.
\end{theorem}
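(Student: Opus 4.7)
The plan is to follow the template of the proof of Theorem \ref{main-2-result-form-pres}, adapting each step slot by slot to the $n$-ary setting; the implication (ii)$\Rightarrow$(i), and (iii) from (ii), are immediate, so the only task is (i)$\Rightarrow$(ii). After rescaling I assume $\|E(x_1,\ldots,x_n)\|\leq\|x_1\|\cdots\|x_n\|$. Pick $w\in\Xa$ with $p:=E(w,\ldots,w)\in G_{\Aa}$, choose $r>0$ with $B(p,r)\subseteq G_{\Aa}$, and select $\beta>0$ small enough that the telescoping identity
$$E(x,\ldots,x)-E(w,\ldots,w)=\sum_{j=1}^{n}E\bigl(\underbrace{x,\ldots,x}_{j-1},\,x-w,\,\underbrace{w,\ldots,w}_{n-j}\bigr),$$
combined with the bound on $E$, forces $\|E(x,\ldots,x)-p\|<r$ for every $x\in B(w,\beta)$. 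Hence $E(x,\ldots,x)\in G_{\Aa}$ on this ball, and I define $h\colon B(w,\beta)\to\Aa$ by $h(z):=F(z,\ldots,z)\,E(z,\ldots,z)^{-1}$.

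The next step establishes, for each slot $j\in\{1,\ldots,n\}$, each $z\in B(w,\beta)$ and each $y\in\Xa$, the key identity
$$(\ast)\qquad F(z,\ldots,z,\underset{j}{y},z,\ldots,z)=h(z)\,E(z,\ldots,z,\underset{j}{y},z,\ldots,z).$$
For even $j$, set $b_j:=-E(z,\ldots,z)^{-1}E(z,\ldots,z,\underset{j}{y},z,\ldots,z)$; condition (B) gives $E(z,\ldots,z,\underset{j}{zb_j+y},z,\ldots,z)=0$, so hypothesis (i) gives the same for $F$, and solving yields $(\ast)$ with no need for commutativity. For odd $j$ one chooses the analogous element on the left via adjoints using (C); after invoking Lemma \ref{theorem-tot-impl-ab} to pass to the abelian setting (which is available since $\Aa_m'$ is total), factors can be rearranged freely and $(\ast)$ follows in this case too.

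To show $h$ is constant, fix $\fii\in\Aa'_m$, $z\in B(w,\beta)$, $y\in\Xa$, and $t_o>0$ with $z+ty\in B(w,\beta)$ for all $t\in(-t_o,t_o)$. Expanding $E(z+ty,\ldots,z+ty)$ and $F(z+ty,\ldots,z+ty)$ by multilinearity produces the constant term $E(z,\ldots,z)$ (resp.\ $F(z,\ldots,z)$) plus $t\sum_{j=1}^{n}E(z,\ldots,z,\underset{j}{y},z,\ldots,z)$ (resp.\ the $F$-analogue) plus $O(t^2)$. Carrying out the same algebraic rearrangement as in the proof of Theorem \ref{main-2-result-form-pres} and applying $\fii$, multiplicativity and linearity of $\fii$ reduce the incremental quotient to a sum of $n$ expressions each of which vanishes by $(\ast)$; hence $\frac{d}{dt}\fii(h(z+ty))\big|_{t=0}=0$. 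Totality of $\Aa'_m$ then gives that $h$ is constant on line segments through each of its points, hence constant on $B(w,\beta)$; call the value $c$.

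Finally, I extend $F=cE$ from the single-free-slot situation $(\ast)$ to all of $\Xa^n$ by induction on the number of \emph{free} slots. The induction hypothesis at level $k\in\{1,\ldots,n\}$ asserts that $F(y_1,\ldots,y_n)=cE(y_1,\ldots,y_n)$ whenever $k$ of the arguments lie in $\Xa$ and the remaining $n-k$ equal a single $z\in B(w,\beta)$; the case $k=1$ is $(\ast)$. To pass from $k$ to $k+1$, fix $k$ arbitrary $y_i\in\Xa$, fix $z\in B(w,\beta)$, and consider the map $L(x):=F(\ldots,x,\ldots)-cE(\ldots,x,\ldots)$ obtained by promoting a hitherto-$z$ slot into a free one occupied by $x$. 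By condition (A), $L$ is additive on $\Xa$, and by the inductive hypothesis it vanishes on the nonempty open set $B(w,\beta)$; an additive map between normed spaces vanishing on a nonempty open set is identically zero, so $L\equiv 0$ on $\Xa$, completing the induction. At $k=n$ this gives (ii), from which (iii) is immediate. The main obstacle is the multilinear bookkeeping in the incremental-quotient computation (the expansion of $E(z+ty,\ldots,z+ty)$ has $2^n$ terms, of which only the $O(1)$ and $O(t)$ parts survive the limit, and one must check that the $n$ first-order terms cancel via $(\ast)$); correctly handling the parity of slots in $(\ast)$, due to the different shapes of conditions (B) and (C), is the other delicate point.
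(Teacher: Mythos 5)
Your proposal diverges from the paper's proof in a structurally important way: the paper proves the theorem by induction on $n$, applying the already-established case $n=2$ (Theorem \ref{main-2-result-form-pres}) and then, at stage $n+1$, freezing the last slot at $b\in B(w,\beta)$ to obtain $E_b,F_b\colon\Xa^n\to\Aa$ and invoking the inductive hypothesis to get $F(x_1,\ldots,x_n,b)=\widehat{c}(b)E(x_1,\ldots,x_n,b)$ with \emph{all of the first $n$ slots completely free}; after showing $\widehat{c}$ is locally constant, only the single last slot remains to be liberated by the ``additive map vanishing on an open set'' argument. You instead try to generalize the $n=2$ proof directly, and most of your steps are fine: the telescoping estimate for invertibility of $E(x,\ldots,x)$ on a ball, the one-free-slot identity $(\ast)$ (including the even/odd slot parity and the use of Lemma \ref{theorem-tot-impl-ab}), and the difference-quotient argument showing $h$ is constant all go through.

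The gap is in your final induction on the number of free slots. Your level-$k$ hypothesis requires the $n-k$ non-free arguments to \emph{equal a single} $z\in B(w,\beta)$. In the step from $k$ to $k+1$ you form $L(x):=F(y_1,\ldots,y_k,x,z,\ldots,z)-cE(y_1,\ldots,y_k,x,z,\ldots,z)$ and claim it vanishes on $B(w,\beta)$ ``by the inductive hypothesis''; but for $x\in B(w,\beta)$ with $x\neq z$ the non-free arguments are $x,z,\ldots,z$, which are \emph{not} all equal, so the level-$k$ hypothesis does not apply. It yields only $L(z)=0$, a single point, which is useless for the additivity argument. (The sole exception is the step $k=n-1\to k=n$, where no pinned $z$'s remain besides the promoted slot --- which is exactly why the $n=2$ proof works but the direct generalization stalls for $n\geq 3$.) Note also that $(\ast)$ cannot be upgraded to two or more free slots by the same trick, since solving $E(y_1,zb+y_2,z,\ldots,z)=0$ for $b$ would require invertibility of $E(y_1,z,\ldots,z)$ for arbitrary $y_1$, which is unavailable. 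The natural repair is precisely the paper's device: induct on the arity $n$ so that at each stage the inductive hypothesis already hands you all but one slot free.
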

\begin{proof}
The proof of Theorem \ref{main-n-result-form-pres} is by induction. For $n=2$ we have
proved that it is true. Indeed, see Theorem \ref{main-2-result-form-pres}.
Suppose that Theorem \ref{main-n-result-form-pres} holds for $n$. Now, assume
that $E\colon \Xa^{n+1}\to\Aa$, $F\colon \Xa^{n+1}\to\Aa$ are
multi-$\Aa$-linear functions. Moreover, assume that $E$ is bounded and strong. Suppose that
\begin{center}
$\forall_{x_1,\ldots,x_n,x_{n+1}\in\Xa}\quad E(x_1,\ldots,x_n,x_{n+1})=0\, \Longrightarrow\, F(x_1,\ldots,x_n,x_{n+1})=0$.
\end{center}
Since $E$ is strong, it follows that there
exists $w\in\Xa$ with $E(w,\ldots,w,w)\in G_{\Aa}$.
It is clear that $w\neq 0$. Put $p:=E(w,\ldots,w,w)$.
We know that $G_{\Aa}$ is open and $E$ is bounded. Analysis similar to that
in the proof of Theorem \ref{main-2-result-form-pres} shows that there is
an open ball $B(w,\beta)\subseteq\Xa$ such that
\begin{align}\label{property-invertibility-n}
x\in B\left(w,\beta\right) \ \ \Longrightarrow \ \ E(x,\ldots,x,x)\in G_{\Aa},
\end{align}
i.e. $E(x,\ldots,x,x)$ is invertible. Fix $b\in B(w,\beta)$. The continuity of $E$ implies
that there is an open ball $B(u,\eta)\subseteq B(w,\beta)$ such that
\begin{align}\label{prop-e-g-a}
\begin{array}{ll}
\forall_{z\in B(u,\eta)}\ \forall_{y\in\Xa}\ \exists_{t_{z,y}>0}\ \forall_{|t|<t_{z,y}}&
z+ty\in B(u,\eta),\ \ {\rm and}\\
&E\left(\stackrel{(1)}{z},\ldots,\stackrel{(n-1)}{z},\stackrel{(n)}{z+ty},\stackrel{(n+1)}{z+ty}\right)\in G_{\Aa}.
\end{array}
\end{align}
Define multi-$\Aa$-linear functions
$E_b\colon \Xa^{n}\to\Aa$ and $F_b\colon \Xa^{n}\to\Aa$ by
the following formulas
\begin{align*}
E_b(x_1,\ldots,x_n):=E(x_1,\ldots,x_n,b) \quad \mbox{and} \quad F_b(x_1,\ldots,x_n):=F(x_1,\ldots,x_n,b).
\end{align*}
It is easily seen that
\begin{center}
$\forall_{x_1,\ldots,x_n\in\Xa}\quad E_b(x_1,\ldots,x_n)=0\, \Longrightarrow\, F_b(x_1,\ldots,x_n)=0$,
\end{center}
and $E_b$ is strong by \eqref{property-invertibility-n}.
It follows from the induction hypothesis that there
exists a function $\widehat{c}\colon\ B(w,\beta)\to\Aa$ such that
\begin{align}\label{property-c-from-induct}
\forall_{x_1,\ldots,x_n\in\Xa}\quad F_b(x_1,\ldots,x_n)=\widehat{c}(b)E_b(x_1,\ldots,x_n).
\end{align}
Now the property \eqref{property-c-from-induct} becomes
\begin{align}\label{property-c-from-induct-b}
\forall_{x_1,\ldots,x_n\in\Xa}\quad F(x_1,\ldots,x_n,b)=\widehat{c}(b)E(x_1,\ldots,x_n,b).
\end{align}
It is helpful to recall
that $z+ty\in B(w,\beta)$. Combining \eqref{property-c-from-induct-b}
with \eqref{prop-e-g-a} yields
\begin{center}
$\widehat{c}(z+ty)=F(z,\ldots,z,z+ty,z+ty)E(z,\ldots,z,z+ty,z+ty)^{-1}$.
\end{center}
Fix $\fii\in\Aa'_m$.
In a similar way as in the proof of Theorem \ref{main-2-result-form-pres} we
obtain $\lim\limits_{t\rightarrow 0}\frac{\fii\left(\widehat{c}(z+ty)\right)-\fii\left(\widehat{c}(z)\right)}{t}=0$.
Since $\Aa_m'$ is total, $\widehat{c}\colon B\left(w,\beta\right)\to\Aa$ is constant on $B(u,\eta)$; so we
define $c:=\widehat{c}(\cdot)|_{B(u,\eta)}$. Now the property \eqref{property-c-from-induct-b} becomes
\begin{align*}
\forall_{x_1,\ldots,x_n\in\Xa}\ \forall_{z\in B(u,\eta)}\quad F(x_1,\ldots,x_n,z)=cE(x_1,\ldots,x_n,z).
\end{align*}
Thus a linear operator
$F(x_1,\ldots,x_n,\cdot)-cE(x_1,\ldots,x_n,\cdot)\colon\Xa\to\Aa$
is constant
on the open set $B(u,\eta)$. Therefore
$F(x_1,\ldots,x_n,x_{n+1})-cE(x_1,\ldots,x_n,x_{n+1})=0$ for all $x_j\in\Xa$ with $j=1,\ldots,n,n+1$ (see also the
proof of Theorem \ref{main-2-result-form-pres}).
\end{proof}
%%%%%%%%%%%%%%%%%%%%%
Combining Theorem~\ref{T.2.1} with Theorem \ref{main-2-result-form-pres},
we obtain characterizations of the orthogonality preserving property for pairs of
mappings on Hilbert $C^*$-modules (see also \cite{A.O, C.L.W, F.M.P, F.M.Z, I.T, L.N.W, M.Z}).
%%%%%%%%%%%%%%%%%%%
\begin{corollary}\label{C.2.4}
Let $\Aa$ be a $C^*$-algebra with identity.
Let $\mathscr{X}$ and $\mathscr{Y}$ be Hilbert $\mathscr{A}$-modules and $x, y \in\mathscr{X}$.
For mappings $T, S: \mathscr{X} \rightarrow \mathscr{Y}$
the following assertions are equivalent:
\begin{itemize}
\item[(i)] $x\perp y \Longrightarrow Tx \perp Sy$,
\item[(ii)]
$\forall_{a\in \mathscr{A}}\ |xa + y|^2 \geq |xa|^2 \Longrightarrow \forall_{a\in \mathscr{A}}\ \big|(Tx)a + Sy\big|^2 \geq \big|(Tx)a\big|^2$,
\item[(iii)]
$\forall_{a\in \mathscr{A}}\ |xa + y| \geq |xa| \Longrightarrow \forall_{a\in \mathscr{A}}\ \big|(Tx)a + Sy\big| \geq \big|(Tx)a\big|$,
\item[(vi)]
$\forall_{a\in \mathscr{A}}\ \|xa + y\| \geq \|xa\|\Longrightarrow \forall_{a\in \mathscr{A}}\ \big\|(Tx)a + Sy\big\| \geq \big\|(Tx)a\big\|$.
\end{itemize}
Moreover, if $\Aa'_m$ is total, then each of the above conditions is equivalent to:
\begin{itemize}
\item[(v)] $\exists_{c\in\Aa}\, \forall_{x,y\in\Xa} \ip{Tx}{Sy}=c\ip{x}{y}$.
\end{itemize}
If, in addition, $T=S$, then each of the above conditions is equivalent to:
\begin{itemize}
\item[(vi)] $\forall_{x,y\in\Xa}\ |x|\leq|y|\,\Longrightarrow\, |Tx|\leq|Ty|$.
\end{itemize}
\end{corollary}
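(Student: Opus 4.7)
The plan is to bootstrap from two results already proved in the paper: Theorem~\ref{T.2.1}, which gives several equivalent characterisations of $x\perp y$ via modulus-type inequalities, and Theorem~\ref{main-2-result-form-pres}, which extracts the scalar $c$ from an orthogonality-preserving implication between two multi-$\Aa$-linear forms. Lemma~\ref{theorem-tot-impl-ab} (totality of $\Aa'_m$ forces $\Aa$ to be abelian) will be needed for (v) and (vi).

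First I would settle (i)$\Leftrightarrow$(ii)$\Leftrightarrow$(iii)$\Leftrightarrow$(iv) by a double application of Theorem~\ref{T.2.1}. The hypothesis of each of (ii)--(iv) is equivalent to $x\perp y$ (applying Theorem~\ref{T.2.1} in $\Xa$), while the conclusion is equivalent to $Tx\perp Sy$ (applying the same theorem in $\Ya$ to the pair $(Tx,Sy)$). Consequently each of (ii)--(iv) collapses to the single implication $x\perp y\Rightarrow Tx\perp Sy$, which is (i). No linearity of $T$ or $S$ is needed in this step.

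Under the totality hypothesis on $\Aa'_m$, and assuming as the setting of (v) requires that $T,S$ are $\Aa$-linear, I would deduce (i)$\Leftrightarrow$(v) by specialising Theorem~\ref{main-2-result-form-pres} to
\[
E(x,y):=\ip{x}{y},\qquad F(x,y):=\ip{Tx}{Sy}.
\]
Both maps are multi-$\Aa$-linear, $E$ is bounded by the $C^*$-module Cauchy--Schwarz inequality, and $E$ is strong as soon as $\Xa$ contains an element $w$ with $\ip{w}{w}$ invertible, which is the natural standing assumption in this context. Condition (i) is exactly the premise of Theorem~\ref{main-2-result-form-pres} and (v) is its conclusion; the direction (v)$\Rightarrow$(i) is immediate.

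The equivalence with (vi) in the case $T=S$ is the part requiring most care. From (v) we have $|Tx|^2=c\,|x|^2$; plugging in the strong element $w$ gives $c=\ip{Tw}{Tw}\ip{w}{w}^{-1}$, a product of two commuting positive elements in the abelian $C^*$-algebra $\Aa$ (abelian by Lemma~\ref{theorem-tot-impl-ab}), and therefore positive. Functional calculus then yields $|Tx|=c^{1/2}|x|$, so multiplication by $c^{1/2}\geq 0$ preserves the order and (vi) follows from $|x|\leq|y|$. For the converse (vi)$\Rightarrow$(i), I would exploit the characterisation $x\perp y\Leftrightarrow\forall_{\lambda\in\C}\ |x+\lambda y|=|x-\lambda y|$ recalled at the start of Section~2: applying (vi) to each of the two inequalities hidden in such an equality transfers it to $|Tx+\lambda Ty|=|Tx-\lambda Ty|$ for every $\lambda\in\C$, whence $Tx\perp Ty$. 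The most delicate point in the whole argument is verifying the positivity of $c$ cleanly; this is precisely where the strongness of $E$ together with the abelianness of $\Aa$ becomes indispensable.
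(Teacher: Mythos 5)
Your proposal is correct and follows exactly the route the paper indicates (the corollary carries no written proof, only the remark that it follows by combining Theorem~\ref{T.2.1} with Theorem~\ref{main-2-result-form-pres}): deriving (i)--(iv) by applying Theorem~\ref{T.2.1} to the pair $(x,y)$ in $\Xa$ and to $(Tx,Sy)$ in $\Ya$, and obtaining (v) from Theorem~\ref{main-2-result-form-pres} with $E(x,y)=\ip{x}{y}$ and $F(x,y)=\ip{Tx}{Sy}$, is precisely the intended argument. You also rightly make explicit the hypotheses the statement leaves tacit --- the $\Aa$-linearity of $T,S$ (needed for $F$ to be multi-$\Aa$-linear and for your $|T(x\pm\lambda y)|=|Tx\pm\lambda Ty|$ step) and the existence of $w\in\Xa$ with $\ip{w}{w}$ invertible (strongness of $E$) --- without which the appeal to Theorem~\ref{main-2-result-form-pres} would not go through, and your argument for (vi), which the paper does not discuss at all, is sound.
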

%%%%%%%%%%%%%%%%%%%%%%
Next we show that the some multi-$\Aa$-linear mapping preserve the invertibility in the below sense.
%%%%%%%%%%%%%%%%%%%%
\begin{corollary}\label{main-n-result-form-pres-inv}
Let $\Aa$ be a $C^*$-algebra with identity. Assume that $\Aa'_m$ is a total set.
Let $\mathscr{X}$ be a right $\mathscr{A}$-module, and suppose that $\Xa$ be a normed space.
Let $n\geq 2$ and let $E\colon \Xa^n\to\Aa$ and $F\colon \Xa^n\to\Aa$ be
multi-$\Aa$-linear functions. Moreover, assume that $E$ is bounded and strong.
Suppose that
\begin{align}\label{prop-pres-orth-inv-n}
\forall_{x_1,\ldots,x_n\in\Xa}\quad E(x_1,\ldots,x_n)=0\, \Longrightarrow\, F(x_1,\ldots,x_n)=0.
\end{align}
The following assertions are equivalent:

{\rm (i)} \ $\exists_{z\in\Xa}\ E(z,\ldots,z),F(z,\ldots,z)\in G_{\Aa}$,

{\rm (ii)} \ $\forall_{z_1,\ldots,z_n\in\Xa}\ E(z_1,\ldots,z_n)\in G_{\Aa}\, \Longrightarrow\, F(z_1,\ldots,z_n)\in G_{\Aa}$.
\end{corollary}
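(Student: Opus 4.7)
The plan is to leverage Theorem~\ref{main-n-result-form-pres} directly: the hypothesis \eqref{prop-pres-orth-inv-n} together with the boundedness and strongness of $E$ gives an element $c\in\Aa$ such that
\begin{align*}
\forall_{x_1,\ldots,x_n\in\Xa}\quad F(x_1,\ldots,x_n)=cE(x_1,\ldots,x_n).
\end{align*}
Once this representation is in hand, the equivalence of (i) and (ii) reduces to an elementary invertibility argument in the unital algebra $\Aa$.

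For the implication (ii)$\Rightarrow$(i), I would simply invoke strongness: there exists $w\in\Xa$ with $E(w,\ldots,w)\in G_{\Aa}$, and then (ii) applied to $z_1=\cdots=z_n=w$ yields $F(w,\ldots,w)\in G_{\Aa}$, so $z:=w$ witnesses (i).

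For the implication (i)$\Rightarrow$(ii), I would fix the $z\in\Xa$ provided by (i) and use the identity $F(z,\ldots,z)=cE(z,\ldots,z)$. Since both $F(z,\ldots,z)$ and $E(z,\ldots,z)$ lie in $G_{\Aa}$, we can solve $c=F(z,\ldots,z)\cdot E(z,\ldots,z)^{-1}$, which exhibits $c$ as a product of two invertible elements; thus $c\in G_{\Aa}$. (Note that by Lemma~\ref{theorem-tot-impl-ab} the algebra $\Aa$ is abelian, but one does not even need this here.) Then, for any $z_1,\ldots,z_n\in\Xa$ with $E(z_1,\ldots,z_n)\in G_{\Aa}$, the equation $F(z_1,\ldots,z_n)=cE(z_1,\ldots,z_n)$ displays $F(z_1,\ldots,z_n)$ as a product of invertibles, hence $F(z_1,\ldots,z_n)\in G_{\Aa}$.

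There is essentially no obstacle beyond recognizing that the previous theorem already does the heavy lifting; the only point to be careful about is not to invert $c$ in the wrong place, i.e.\ to extract invertibility of $c$ \emph{from} the single witness $z$ in~(i) rather than assuming it.
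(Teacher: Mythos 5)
Your proof is correct and follows essentially the same route as the paper: both reduce the corollary to the identity $F=cE$ supplied by Theorem~\ref{main-n-result-form-pres} and then argue about invertibility of $c$ in the unital algebra $\Aa$. If anything, your write-up is slightly more careful than the paper's, which labels (i)$\Rightarrow$(ii) as ``visible'' even though that is the direction genuinely requiring $F=cE$, whereas you correctly identify (ii)$\Rightarrow$(i) as the trivial direction and extract the invertibility of $c$ from the witness in (i).
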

\begin{proof}
It is visible that (i) implies (ii). Now, we assume (ii). Combining \eqref{prop-pres-orth-inv-n} and (ii) and
Theorem \ref{main-n-result-form-pres}, we immediately get
$F=cE$. Since $E$ is strong, the equality $F=cE$ implies that $c\in G_{\Aa}$. The rest is clear.
\end{proof}
%%%%%%%%%%%%%%%%%%%%
%%%%%%%%%%%%%%%%%%%%
\section{Concluding remarks}
Let us quote a result from \cite{A.R.3}.
%%%%%%%%%%%%%%%%%%%%
\begin{lemma}\cite[Theorem~2.6]{A.R.3}\label{L.2.5}
Let $\mathscr{X}$ be a full Hilbert $\mathscr{A}$-module.
Then the following statements are equivalent:
\begin{itemize}
\item[(i)] $\perp_{B}$ is a symmetric relation,
\item[(ii)] $\perp^s_{B} \,= \,\perp$,
\item[(iii)] $\mathscr{A}$ or $\Ka(\mathscr{X})$ is isomorphic to $\mathbb{C}$.
\end{itemize}
\end{lemma}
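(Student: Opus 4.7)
The plan is to establish a cyclic chain of implications: handle $(\text{iii}) \Rightarrow (\text{i})$ and $(\text{iii}) \Rightarrow (\text{ii})$ directly via case analysis, and close the circle by proving $(\text{i}) \Rightarrow (\text{iii})$ and $(\text{ii}) \Rightarrow (\text{iii})$ contrapositively. Throughout, the principal tool is the state-characterization of Birkhoff--James orthogonality from Lemma~\ref{L.2.0}, combined with the always-valid chain $\perp \Rightarrow \perp^s_B \Rightarrow \perp_B$ recalled in the introduction.

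For the implications out of $(\text{iii})$, I would split into two sub-cases. If $\mathscr{A}\cong\mathbb{C}$, then $\mathscr{X}$ is an ordinary complex Hilbert space, and in that setting $\perp$, $\perp^s_B$ and $\perp_B$ all collapse to the standard inner-product orthogonality, which is manifestly symmetric. If instead $\Ka(\mathscr{X})\cong\mathbb{C}$, then $\Ka(\mathscr{X})$ is one-dimensional and therefore spanned by a single rank-one operator $\theta_{e,e}$; combined with fullness of $\mathscr{X}$, this forces every vector to factor as $x=ea$ for some $a\in\mathscr{A}$. A short computation using $\ip{ea}{eb}=a^*\ip{e}{e}b$ then reduces the three orthogonality relations to a single scalar condition on $\mathscr{A}$, so $\perp^s_B=\perp$ and $\perp_B$ is symmetric in this case as well.

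The converse direction is where the real work lies. To prove $(\text{i})\Rightarrow(\text{iii})$, I would argue contrapositively: assume $\mathscr{A}\not\cong\mathbb{C}$ \emph{and} $\Ka(\mathscr{X})\not\cong\mathbb{C}$, and produce $x,y\in\mathscr{X}$ with $x\perp_B y$ but $y\not\perp_B x$. The idea is to exploit either the non-commutativity (or higher-dimensionality) of $\mathscr{A}$, or the existence of two $\mathscr{A}$-independent directions in $\mathscr{X}$ witnessed by a compact operator of rank $\ge 2$, and to arrange $x,y$ so that one can select a state $\varphi$ realising Lemma~\ref{L.2.0}'s conditions $\varphi(\ip{x}{x})=\|x\|^2$ and $\varphi(\ip{x}{y})=0$, while the dual conditions $\psi(\ip{y}{y})=\|y\|^2$ and $\psi(\ip{y}{x})=0$ are simultaneously unattainable for any state $\psi$. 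The implication $(\text{ii})\Rightarrow(\text{iii})$ is then handled analogously, using the Example preceding Theorem~\ref{T.2.1} (namely $\mathscr{A}=M_2(\mathbb{C})$ with $X,Y$ explicit matrices) as a template for extracting, under the negation of $(\text{iii})$, a witness $x\perp^s_B y$ with $\ip{x}{y}\neq 0$. The principal obstacle is this last structural step: in both contrapositives, one must turn an abstract richness hypothesis on $\mathscr{A}$ or $\Ka(\mathscr{X})$ into a concrete pair of vectors with controlled state-theoretic behavior, and making that construction uniform across the two negated cases is where the argument will have to work hardest.
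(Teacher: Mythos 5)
First, a point of order: the paper does not prove this lemma at all --- it is imported verbatim from Aramba\v{s}i\'c and Raji\'c \cite[Theorem~2.6]{A.R.3} and used as a black box in the proof of Theorem~\ref{T.2.6}. So there is no internal proof to compare your attempt against; it has to stand on its own.

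Judged on its own, your attempt has a genuine gap. The implications out of (iii) are essentially fine: when $\mathscr{A}\cong\mathbb{C}$ the module is a Hilbert space and all three relations collapse by James's classical theorem, and when $\Ka(\Xa)\cong\mathbb{C}$ the identity $x\ip{y}{z}=\lambda(x,y)\,z$ indeed forces $\Xa=e\Aa$ for any nonzero $e$, after which the reduction you describe can be carried out (though your ``single scalar condition on $\mathscr{A}$'' is asserted rather than derived --- verifying that $\perp^s_B$ collapses to $\perp$ for $x=ea$, $y=eb$ still takes an argument). The real content of the lemma, however, lies in $(\mathrm{i})\Rightarrow(\mathrm{iii})$ and $(\mathrm{ii})\Rightarrow(\mathrm{iii})$, and for these you offer only a strategy: assume both $\mathscr{A}\not\cong\mathbb{C}$ and $\Ka(\Xa)\not\cong\mathbb{C}$ and ``arrange $x,y$'' so that the state conditions of Lemma~\ref{L.2.0} hold in one order but fail in the other. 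No such pair is exhibited, no mechanism is given for turning the joint failure of (iii) into the required obstruction (note that the hypothesis is a conjunction of two negations living in different algebras, $\mathscr{A}$ and $\Ka(\Xa)$, which is exactly why the known proofs pass through the Morita/linking-algebra picture of a full module rather than an ad hoc choice of vectors), and you yourself flag this construction as the ``principal obstacle'' still to be overcome. As written, the hard half of the equivalence is a plan, not a proof.
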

%%%%%%%%%%%%%%%%%
Our next result is stated as follows.
%%%%%%%%%%%%%%%%%
\begin{theorem}\label{T.2.6}
Let $\mathscr{X}$ be a full Hilbert $\mathscr{A}$-module.
Then the following statements are equivalent:
\begin{itemize}
\item[(i)]  $\forall_{x,y\in\Xa}\ \ x\perp_{B}y \Longrightarrow \|x + y\|\geq \|y\|$,
\item[(ii)] $\mathscr{A}$ or $\Ka(\mathscr{X})$ is isomorphic to $\mathbb{C}$.
\end{itemize}
\end{theorem}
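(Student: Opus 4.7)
The plan is to reduce the equivalence to Lemma~\ref{L.2.5}, which already characterizes when Birkhoff--James orthogonality is symmetric on a full Hilbert $\mathscr{A}$-module. The bridge is the observation that condition (i), although it only asserts a single norm inequality $\|x+y\|\geq\|y\|$ under the hypothesis $x\perp_B y$, is actually equivalent to the symmetry of $\perp_B$ once one exploits the complex homogeneity in the second slot.

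For the easy direction (ii)$\Rightarrow$(i), I would invoke Lemma~\ref{L.2.5}: (ii) gives that $\perp_B$ is symmetric, so $x\perp_B y$ yields $y\perp_B x$, which by the definition of Birkhoff--James orthogonality says $\|\lambda x+y\|\geq\|y\|$ for every $\lambda\in\mathbb{C}$; specializing to $\lambda=1$ produces $\|x+y\|\geq\|y\|$.

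For the main direction (i)$\Rightarrow$(ii), the key step is a rescaling argument. Suppose $x\perp_B y$. Since Birkhoff--James orthogonality is invariant under scaling of the second vector, we have $x\perp_B \lambda y$ for every $\lambda\in\mathbb{C}$. Applying the hypothesis (i) to the pair $(x,\lambda y)$ yields
\begin{equation*}
\|x+\lambda y\|\geq\|\lambda y\|=|\lambda|\,\|y\|\qquad (\lambda\in\mathbb{C}).
\end{equation*}
For $\lambda\neq0$ I would divide by $|\lambda|$ and substitute $\mu:=1/\lambda$ to obtain $\|\mu x+y\|\geq\|y\|$ for all $\mu\neq 0$; the case $\mu=0$ is trivial. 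Consequently $y\perp_B x$, so $\perp_B$ is symmetric, and Lemma~\ref{L.2.5} yields (ii).

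I do not anticipate a real obstacle here; the only subtle point is noticing that the apparently weaker one-parameter inequality in (i) upgrades to the full Birkhoff--James inequality through complex scaling of $y$, and hence to symmetry of $\perp_B$. Once this is observed, Lemma~\ref{L.2.5} closes the argument in both directions.
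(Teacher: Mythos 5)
Your proof is correct, and in the direction (i)$\Rightarrow$(ii) it takes a genuinely different (and more elementary) route than the paper. You upgrade the single inequality in (i) to full Birkhoff--James orthogonality of the reversed pair by the scaling substitution $y\mapsto\lambda y$, $\mu=1/\lambda$ --- this is valid, since $x\perp_B y$ does imply $x\perp_B\lambda y$ and $\|x+\lambda y\|=|\lambda|\,\|\mu x+y\|$ --- thereby showing directly that $\perp_B$ is symmetric and invoking the equivalence (i)$\Leftrightarrow$(iii) of Lemma~\ref{L.2.5}. The paper instead applies hypothesis (i) to the pairs $(z,wa)$ for $z\perp^s_B w$ and $a\in\mathscr{A}$, obtaining $\|wa+z\|\geq\|wa\|$ for all $a$, then uses Theorem~\ref{T.2.1} to conclude $z\perp w$; this shows $\perp^s_B\,=\,\perp$ and appeals to the equivalence (ii)$\Leftrightarrow$(iii) of Lemma~\ref{L.2.5}. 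Your argument avoids Theorem~\ref{T.2.1} entirely and uses only the definition of $\perp_B$ plus the symmetry criterion, which makes it shorter and self-contained; the paper's version has the side benefit of exhibiting the theorem as an application of its main characterization Theorem~\ref{T.2.1}, which is presumably why the authors chose that route. The direction (ii)$\Rightarrow$(i) is identical in both.
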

\begin{proof}
(i)$\Rightarrow$(ii) Suppose (i) holds.
Let $z$ and $w$ be arbitrary elements of $\mathscr{X}$ such that $z\perp^s_{B}y$.
Then $z\perp_{B}wa$ for all $a\in \mathscr{A}$.
So our assumption implies that $\|wa + z\| \geq \|wa\|$ for all $a\in \mathscr{A}$.
By Theorem~\ref{T.2.1} it follows $z\perp w$. Thus, $\perp^s_{B} \subseteq \perp$ and hence
$\perp^s_{B}$ coincides with the inner product orthogonality.
By the equivalence (ii)$\Rightarrow$(iii) in Lemma~\ref{L.2.5}, we deduce that
$\mathscr{A}$ or $\Ka(\mathscr{X})$ is isomorphic to $\mathbb{C}$.

(ii)$\Rightarrow$(i) If (ii) holds then, by the equivalence (i)$\Rightarrow$(iii) in Lemma~\ref{L.2.5},
$\perp_{B}$ is a symmetric relation. Now, let $x, y \in \mathscr{X}$ be such that $x\perp_{B}y$.
Then we get $y\perp_{B}x$. In particular (taking $\lambda = 1$), we conclude $\|x + y\|\geq \|y\|$.
\end{proof}
%%%%%%%%%%%%%%%%%%
When a $C^*$-algebra $\mathscr{A}$ is regarded as a Hilbert $\mathscr{A}$-module,
then $\Ka(\mathscr{A})$ is isomorphic to $\mathscr{A}$ (see e.g. \cite[p.10]{Lan}).
Therefore, as a consequence of Theorem~\ref{T.2.6}, we have the following result.
%%%%%%%%%%%%%%%%%%
\begin{corollary}\label{C.2.7}
Let $\mathscr{A}$ be a $C^*$-algebra.
The following statements are equivalent:
\begin{itemize}
\item[(i)]  $\forall_{a, b\in\Aa}\ \ a\perp_{B}b \Longrightarrow \|a + b\|\geq \|b\|$,
\item[(ii)] $\mathscr{A}$ is isomorphic to $\mathbb{C}$.
\end{itemize}
\end{corollary}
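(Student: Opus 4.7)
The plan is to deduce Corollary~\ref{C.2.7} directly from Theorem~\ref{T.2.6} by specializing the Hilbert $\Aa$-module $\Xa$ to be $\Aa$ itself, with $\Aa$-valued inner product $\langle a, b\rangle = a^*b$. This is exactly the setup used in the corollary, so conditions (i) of both statements coincide verbatim once the identification $\Xa = \Aa$ is made.

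First I would verify that $\Aa$, regarded as a Hilbert module over itself, is full in the sense introduced in Section~1. The inner products are of the form $a^*b$, and the ideal they generate contains $a^*a$ for every $a\in\Aa$. Using a standard approximate identity $\{e_\lambda\}$ of $\Aa$ (which exists for every $C^*$-algebra), one has $e_\lambda^*a = e_\lambda a \to a$ for each $a\in\Aa$, so the ideal is dense in $\Aa$. Thus the hypothesis of Theorem~\ref{T.2.6} is met.

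Next I would invoke the fact recorded in the paragraph preceding the corollary: when $\Aa$ is viewed as a Hilbert $\Aa$-module over itself, the $C^*$-algebra of compact module operators satisfies $\Ka(\Aa)\cong\Aa$ (see \cite[p.10]{Lan}). Hence condition (ii) of Theorem~\ref{T.2.6}, namely ``$\Aa$ or $\Ka(\Aa)$ is isomorphic to $\mathbb{C}$'', collapses to the single statement ``$\Aa$ is isomorphic to $\mathbb{C}$'', which is condition (ii) of the corollary.

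Putting the three observations together, Theorem~\ref{T.2.6} applied to $\Xa=\Aa$ yields the equivalence (i)$\Leftrightarrow$(ii) of Corollary~\ref{C.2.7} immediately. I do not expect any real obstacle in carrying this out; the only point requiring a line of justification is the fullness of $\Aa$ over itself in the non-unital case, and that follows from any approximate identity argument as sketched above.
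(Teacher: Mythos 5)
Your proposal is correct and matches the paper's own (very brief) justification: the paper likewise deduces the corollary from Theorem~\ref{T.2.6} by viewing $\Aa$ as a Hilbert module over itself and invoking $\Ka(\Aa)\cong\Aa$ so that the disjunction in Theorem~\ref{T.2.6}(iii) collapses. Your explicit verification of fullness via an approximate identity is a detail the paper leaves unstated, and it is carried out correctly.
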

It remains an unanswered question whether Theorem \ref{main-n-result-form-pres} is
valid also for non-abelian algebras. We show the following affirmative answer, but only for $n=1$.
\begin{theorem}\label{main-1-result-form-pres}
Let $\Aa$ be a $C^*$-algebra with identity (not necessary abelian).
Let $\mathscr{X}$ be a
right $\mathscr{A}$-module. Let $E\colon \Xa\to\Aa$ and $F\colon \Xa\to\Aa$ be
$\Aa$-linear functions. Moreover, assume that $E$ is strong (not necessary bounded). The following
assertions are equivalent:

{\rm (i)} \ $\forall_{x\in\Xa}\quad E(x)=0\, \Longrightarrow\, F(x)=0$,

{\rm (ii)} \ $\exists_{c\in\Aa}\ \forall_{x\in\Xa}\quad F(x)=cE(x)$.
\end{theorem}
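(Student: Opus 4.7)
The plan is straightforward because, in the single-variable case, one can construct elements of $\ker E$ by a simple algebraic identity using the witness $w$ of strongness, bypassing the differentiation/multiplicative-functional machinery needed in Theorems~\ref{main-2-result-form-pres} and~\ref{main-n-result-form-pres}. The implication (ii)$\Rightarrow$(i) is immediate, so I would focus on (i)$\Rightarrow$(ii).

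First I would use the strong hypothesis to fix $w\in\Xa$ such that $E(w)\in G_{\Aa}$, and define the candidate constant
$$c:=F(w)E(w)^{-1}\in\Aa.$$
Then, for each $x\in\Xa$, I would introduce $a_x:=E(w)^{-1}E(x)\in\Aa$ and form the element
$$y_x:=x-wa_x\in\Xa.$$
The key observation, using $\Aa$-linearity of $E$ together with the invertibility of $E(w)$, is that
$$E(y_x)=E(x)-E(w)a_x=E(x)-E(w)E(w)^{-1}E(x)=0,$$
so $y_x$ lies in $\ker E$.

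By hypothesis (i), this forces $F(y_x)=0$. Expanding via the $\Aa$-linearity of $F$ then yields
$$F(x)=F(wa_x)=F(w)a_x=F(w)E(w)^{-1}E(x)=cE(x),$$
which is (ii). There is no real obstacle here: the abelianness of $\Aa$ (and hence totality of $\Aa'_m$) plays no role, and even boundedness of $E$ is not needed, precisely because in the single-variable setting the kernel of $E$ is reached by one algebraic subtraction rather than by the continuous-path/differentiation argument that was essential for $n\geq 2$.
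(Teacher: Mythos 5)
Your proposal is correct and follows essentially the same route as the paper: both fix a witness $w$ with $E(w)\in G_{\Aa}$, set $c=F(w)E(w)^{-1}$, observe that $x-wE(w)^{-1}E(x)$ lies in $\ker E$, and apply (i) to conclude $F(x)=cE(x)$. Your write-up merely makes the kernel element and the $\Aa$-linearity steps more explicit.
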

\begin{proof}
Clearly it is enough to prove (i)$\Rightarrow$(ii). Assume (i).
Since $E$ is strong,
there
exists $w\in\Xa$ such that $E(w,w)\in G_{\Aa}$. Put $c=F(w)E(w)^{-1}$. Fix $x\in\Xa$. It is easy
to see that $E\big(-wE(w)^{-1}E(x)+x\big)=0$. Hence $F\big(-wE(w)^{-1}E(x)+x\big)=0$, and
so $F(x)=F(w)E(w)^{-1}E(x)=cE(x)$.
\end{proof}
%%%%%%%%%%%%%%%%%%%%%
In the context of orthogonality in an inner product space $X$, the classical functional equation
\begin{align}\label{orth-eq-classical-version}
\forall_{x,y\in X}\ \ \langle Tx, Ty\rangle=\langle x,y\rangle
\end{align}
corresponding to orthogonal transformations has received
a lot of contributions in the literature; see e.g. \cite{RL.PW} and \cite{I.T}.
One of them is below and it motivates our next result.
%%%%%%%%%%%%%%%%%%%%%%%%%%%%%%
\begin{theorem}\cite[Proposition 2.3]{I.T}\label{ilisevic-tunsek-theorem-a}
Let $\mathscr{X}$ and $\mathscr{Y}$ be inner product $\mathscr{A}$-modules. For a
mapping $T\colon\mathscr{X}\! \to\! \mathscr{Y}$ and for some $\gamma\!\in\! (0,+\infty)$ the
following assertions are equivalent:
\begin{itemize}
\item[(i)]  $\forall_{x,y\in \Xa}\ \ \langle Tx, Ty\rangle=\gamma^2\langle x,y\rangle$,
\item[(ii)] $T$ is $\mathscr{A}$-linear and $\|Tx\|=\gamma\|x\|$\  {\rm for\ all}\ $x\in  \mathscr{X}$.
\end{itemize}
\end{theorem}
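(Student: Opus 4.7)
The plan is to treat the two implications separately: (i)$\Rightarrow$(ii) reduces to a direct computation with the $C^{*}$-valued inner product, while (ii)$\Rightarrow$(i) requires a subtle $C^{*}$-algebraic step promoting a norm identity to an identity at the level of positive elements.

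For (i)$\Rightarrow$(ii), I would first set $y=x$ in (i), obtaining $\langle Tx,Tx\rangle=\gamma^{2}\langle x,x\rangle$, so that $\|Tx\|^{2}=\|\langle Tx,Tx\rangle\|=\gamma^{2}\|\langle x,x\rangle\|=\gamma^{2}\|x\|^{2}$. To establish $\Aa$-linearity, fix $x,y\in\Xa$ and $a\in\Aa$ and introduce
\begin{align*}
v:=T(x+y)-Tx-Ty,\qquad w:=T(xa)-(Tx)a
\end{align*}
in $\Ya$. Expanding $\langle v,v\rangle$ by sesquilinearity and replacing every occurrence $\langle T\xi,T\eta\rangle$ by $\gamma^{2}\langle\xi,\eta\rangle$ via (i), the nine resulting terms collapse to $\gamma^{2}\langle(x+y)-x-y,(x+y)-x-y\rangle=0$; the same substitution reduces $\langle w,w\rangle$ to $a^{*}\langle Tx,Tx\rangle a$ appearing with coefficients $+1,-1,-1,+1$, whence $\langle w,w\rangle=0$. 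Axiom (C1) then forces $v=w=0$, yielding additivity and $\Aa$-homogeneity.

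For (ii)$\Rightarrow$(i), the strategy is to first upgrade the scalar norm identity to the $\Aa$-valued identity $\langle Tx,Tx\rangle=\gamma^{2}\langle x,x\rangle$ for every $x\in\Xa$, and then polarize. Using $\Aa$-linearity and the $C^{*}$-identity, for every $a\in\Aa$ one has
\begin{align*}
\|a^{*}\langle Tx,Tx\rangle a\|=\|(Tx)a\|^{2}=\|T(xa)\|^{2}=\gamma^{2}\|xa\|^{2}=\gamma^{2}\|a^{*}\langle x,x\rangle a\|.
\end{align*}
Setting $P:=\langle Tx,Tx\rangle$ and $Q:=\gamma^{2}\langle x,x\rangle$, these are positive elements of $\Aa$ with $\|a^{*}Pa\|=\|a^{*}Qa\|$ for every $a\in\Aa$, and I would invoke the claim that this forces $P=Q$. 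I plan to prove the claim by contradiction: if some pure state $\varphi$ separates $P$ and $Q$, Kadison's transitivity theorem applied to the irreducible GNS representation of $\varphi$ produces a sequence $(a_{n})\subseteq\Aa$ for which $\|a_{n}^{*}Pa_{n}\|\to\varphi(P)$ and $\|a_{n}^{*}Qa_{n}\|\to\varphi(Q)$, forcing $\varphi(P)=\varphi(Q)$, a contradiction. Once $\langle Tz,Tz\rangle=\gamma^{2}\langle z,z\rangle$ is established for every $z\in\Xa$, the $C^{*}$-module polarization identity
\begin{align*}
4\langle y,x\rangle=\sum_{k=0}^{3}i^{k}\langle x+i^{k}y,x+i^{k}y\rangle,
\end{align*}
combined with the $\C$-linearity of $T$ (contained in $\Aa$-linearity), delivers $\langle Ty,Tx\rangle=\gamma^{2}\langle y,x\rangle$, and (C4) yields the full assertion (i).

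The hard part will be precisely the promotion step in (ii)$\Rightarrow$(i): passing from $\|a^{*}Pa\|=\|a^{*}Qa\|$ for every $a\in\Aa$ to $P=Q$. For commutative $\Aa$ this is immediate via Gelfand duality, but in the general noncommutative setting it demands Kadison's transitivity theorem or an equivalent density argument. The remaining ingredients, namely the expansions of $\langle v,v\rangle$ and $\langle w,w\rangle$, the polarization identity, and the applications of the $C^{*}$-identity, are routine bookkeeping.
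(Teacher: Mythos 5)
The paper does not prove this statement at all: it is quoted verbatim from Ili\v{s}evi\'{c}--Turn\v{s}ek as \cite[Proposition 2.3]{I.T}, so there is no in-paper proof to compare against; the relevant benchmark is the argument in that source, and your proposal follows essentially the same route (polarization-type expansions for (i)$\Rightarrow$(ii); upgrading the norm identity to $\langle Tx,Tx\rangle=\gamma^2\langle x,x\rangle$ and then polarizing for (ii)$\Rightarrow$(i)). The (i)$\Rightarrow$(ii) half is fine; the only nit is that you verify additivity and $\Aa$-homogeneity but not $\C$-homogeneity $T(\alpha x)=\alpha Tx$, which does not follow from module homogeneity when $\Aa$ is non-unital --- though the same ``expand $\langle u,u\rangle$ and substitute'' trick with $u:=T(\alpha x)-\alpha Tx$ disposes of it.

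The genuine gap is in what you yourself flag as the hard part of (ii)$\Rightarrow$(i). The lemma you need --- for positive $P,Q\in\Aa$, if $\|a^*Pa\|=\|a^*Qa\|$ for all $a\in\Aa$ then $P=Q$ --- is true, but Kadison transitivity as you invoke it does not prove it. Transitivity lets you prescribe $\pi(a_n)$ on a finite-dimensional subspace of a single irreducible representation $\pi$; this yields only the lower bound $\|a_n^*Pa_n\|\geq\varphi(a_n^*Pa_n)=\varphi(P)$ (when $\pi(a_n)\xi_\varphi=\xi_\varphi$), whereas the step you need is the upper bound $\|a_n^*Pa_n\|\to\varphi(P)$. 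Since $\|a^*Pa\|$ is a supremum over \emph{all} representations and all vectors, controlling $\pi(a_n)$ on a finite-dimensional piece of one representation cannot prevent the norm from being picked up elsewhere. The correct tool is the Akemann--Anderson--Pedersen excision theorem: for a pure state $\varphi$ there is a net of positive norm-one elements $e_\lambda$ with $\|e_\lambda b e_\lambda-\varphi(b)e_\lambda^2\|\to 0$ for all $b$, whence $\|e_\lambda Pe_\lambda\|\to\varphi(P)$ and $\|e_\lambda Qe_\lambda\|\to\varphi(Q)$ simultaneously, giving $\varphi(P)=\varphi(Q)$ for every pure state and hence $P=Q$. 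Alternatively, this lemma is itself a preliminary result in \cite{I.T} and can simply be cited. With that repair (and the small $\C$-homogeneity point above) your argument is complete.
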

%%%%%%%%%%%%%%%%%%%%%%%%
Now, we consider a natural generalization of an orthogonality
equation \eqref{orth-eq-classical-version} again on inner product $C^*$-modules.
More precisely, we want to show that Theorem \ref{ilisevic-tunsek-theorem-a} can be strengthen as follows.
%%%%%%%%%%%%%%%%%%%%%%%%%%%%
\begin{theorem}\label{prop-il-tur-inner-pr-pres}
Let $\mathscr{X}, \mathscr{Y}$ be inner product $\mathscr{A}$-modules.
Suppose that $T\colon\mathscr{X} \to \mathscr{Y}$ be a nonvanishing mapping. Let $\gamma\in (0,+\infty)$.
Then, the following conditions are equivalent:
\begin{itemize}
\item[(i)] $\forall_{x,y\in \Xa}\ \ \langle Tx, Ty\rangle=\gamma^2\langle x,y\rangle$,
\item[(ii)] $T$ is $\mathscr{A}$-linear and $|Tx|=\gamma|x|$ for all $x\in\mathscr{X}$,
\item[(iii)] $T$ is linear and $|Tx|=\gamma|x|$ for all $x\in\mathscr{X}$.
\end{itemize}
\end{theorem}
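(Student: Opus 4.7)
The plan is to close the cycle (i)$\Rightarrow$(ii)$\Rightarrow$(iii)$\Rightarrow$(i). The implication (ii)$\Rightarrow$(iii) is free because $\Aa$-linearity automatically entails $\C$-linearity, so nothing has to be proved there. For (i)$\Rightarrow$(ii) I would invoke Theorem~\ref{ilisevic-tunsek-theorem-a} directly: it yields $\Aa$-linearity of $T$ together with $\|Tx\|=\gamma\|x\|$. To upgrade the scalar equality of norms to the $\Aa$-valued equality $|Tx|=\gamma|x|$, specialize $x=y$ in (i) to obtain $|Tx|^2=\ip{Tx}{Tx}=\gamma^2\ip{x}{x}=(\gamma|x|)^2$ in $\Aa$; since $|Tx|$ and $\gamma|x|$ are positive elements whose squares agree, the uniqueness of the positive square root (recalled in the preliminaries) forces $|Tx|=\gamma|x|$.

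The substantive direction is (iii)$\Rightarrow$(i), which I intend to handle by the polarization identity for the $\Aa$-valued inner product. Using the sesquilinearity recorded in (C2)--(C4), one checks that for any $u,v$ in a pre-Hilbert $\Aa$-module,
\begin{align*}
4\ip{u}{v}=|u+v|^2-|u-v|^2-i|u+iv|^2+i|u-iv|^2.
\end{align*}
Apply this with $u=Tx$, $v=Ty$. The $\C$-linearity of $T$ gives $Tx\pm Ty=T(x\pm y)$ and $Tx\pm iTy=T(x\pm iy)$, and the hypothesis $|Tw|=\gamma|w|$ applied to $w=x\pm y$ and $w=x\pm iy$ turns each of the four squares into $\gamma^{2}|x\pm y|^{2}$ or $\gamma^{2}|x\pm iy|^{2}$. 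Reassembling via the same polarization identity on the target side yields $4\ip{Tx}{Ty}=\gamma^{2}\cdot 4\ip{x}{y}$, which is (i).

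The only step that really deserves care is this polarization argument: it is the place where one must exploit $\C$-linearity in an essential way (merely $\mathbb{R}$-linearity plus the modulus condition would not separate the imaginary part of $\ip{Tx}{Ty}$), and it is what allows the jump from the apparently weaker assumption~(iii) into (i). Once (i) is in hand, the promotion from $\C$-linearity to full $\Aa$-linearity is no longer our responsibility: it is delivered gratis by Theorem~\ref{ilisevic-tunsek-theorem-a} through the already-established (i)$\Rightarrow$(ii). The ``nonvanishing'' assumption on $T$ plays no active role in this scheme beyond discarding the trivial case $\Xa=\{0\}$; with $\gamma>0$ fixed, $|Tx|=\gamma|x|$ by itself rules out $T\equiv 0$ as soon as $\Xa\neq\{0\}$.
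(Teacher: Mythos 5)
Your proposal is correct and follows essentially the same route as the paper: the equivalence of (i) and (ii) is delegated to Theorem~\ref{ilisevic-tunsek-theorem-a}, and the substantive implication (iii)$\Rightarrow$(i) is obtained by the $\Aa$-valued polarization identity combined with $\C$-linearity and $|Tw|=\gamma|w|$, exactly as in the paper. Your extra remark on upgrading $\|Tx\|=\gamma\|x\|$ to $|Tx|=\gamma|x|$ via $x=y$ in (i) and uniqueness of positive square roots fills in a detail the paper leaves implicit, but it is not a different method.
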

Before launching into the proof, a few words motivating the proof are
appropriate. It is easy to see that (ii)$\Rightarrow$(iii) is trivial. The $\mathscr{A}$-linearity implies
linearity, but the reverse is not true. Therefore the
the implication (iii)$\Rightarrow$(ii) seems interesting.
\begin{proof}
Taking into account Theorem \ref{ilisevic-tunsek-theorem-a}, it remains to
prove only (iii)$\Rightarrow$(i). For two fixed
vectors $x,y\in \mathscr{X}$, we can derive the following chain of equalities
\begin{align*}
\langle Tx,Ty\rangle &=\! \frac{1}{4}\left( |Ty\!+\!Tx|^2\!-\!|Ty\!-\!Tx|^2 \!-\!i |iTy\!+\!Tx|^2\!+\!i|iTy\!-\!Tx|^2\right)\\
&=\frac{1}{4}\left( |T(y\!+\!x)|^2\!-\!|T(y\!-\!x)|^2 \!-\!i |T(iy\!+\!x)|^2\!+\!i|T(iy\!-\!x)|^2\right)\\
&=\frac{1}{4}\gamma^2\left(|y\!+\!x|^2\!-\!|y\!-\!x|^2 \!-\!i |iy\!+\!x|^2\!+\!i|iy\!-\!x|^2\right)\\
&=\gamma^2\langle x,y\rangle,
\end{align*}
which means that $T$ satisfies the property (i), and we are done.
\end{proof}
%%%%%%%%%%%%%%%%%%%%%%%%%%%%%%%%%%%%%%%%%%%
%\textbf{Acknowledgement.}
%The authors would like to thank the referee for her/his valuable suggestions and comments.
%%%%%%%%%%%%%%%%%%%%%%%%%%%%%%%%%%%%%%%%%%%
\bibliographystyle{amsplain}

\end{document}